\renewcommand*\env@matrix[1][\arraystretch]{%
  \edef\arraystretch{#1}%
  \hskip -\arraycolsep
  \let\@ifnextchar\new@ifnextchar
  \array{*\c@MaxMatrixCols c}}
\tikzset{vertex/.style={minimum size=2mm,circle,fill=black,draw,inner sep=0pt},
         decoration={markings,mark=at position .5 with {\arrow[black,thick]{stealth}}}}
\pgfplotsset{compat=1.10}
\newcommand{\R}{ \mathbb{R} }
\newcommand{\N}{ \mathbb{N} }
\renewcommand{\l}{ {\lambda_{\CCCom{r}} } }
\newtheorem{assumption}{Assumption}
\newcommand{\dpart}[2]{\frac{\partial #1}{\partial #2}}
\newcommand{\monlambda}{\lambda_{\CCCom{r}}}
\renewcommand{\t}{\top}
\newtheorem{remark}[theorem]{Remark}
\newcommand{\Lim}[1]{\raisebox{0.5ex}{\scalebox{0.8}{$\displaystyle \lim_{#1}\;$}}}
\definecolor{myred}{rgb}{0.86,0.14,0.14}
\definecolor{mygreen}{rgb}{.3,.9,.4}
\definecolor{darkmagenta}{rgb}{0.55, 0.0, 0.55}
\newcommand{\ComM}[1]{{\color{black} #1}}	
\newcommand{\Com}[1]{{\color{black} #1}}		
\newcommand{\CCom}[1]{{\color{black} #1}}	
\newcommand{\CCCom}[1]{{\color{black} #1}}	
\newcommand{\rComM}[1]{{\color{black} #1}}	
\newcommand{\rCom}[1]{{\color{black} #1}}		
\newcommand{\rCCCom}[1]{{\color{black} #1}}	
\title{{Analysis of the Blade Element Momentum theory}}
\author{Jeremy Ledoux\thanks{CEREMADE, CNRS, UMR 7534, Universit\'e Paris-Dauphine, PSL University, France
  (\email{ledoux@ceremade.dauphine.fr},\email{sebastian.reyes-riffo@dauphine.eu}).}
\and Sebasti\'an Riffo
\footnotemark[1]
\and Julien Salomon\thanks{INRIA  Paris,  ANGE  Project-Team,  75589  Paris  Cedex  12,  France
   and Sorbonne Universit\'e, CNRS, Laboratoire Jacques-Louis Lions, 75005 Paris, France (\email{julien.salomon@inria.fr})}.
}
\begin{document}

\maketitle

\begin{abstract}
\CCCom{The Blade Element Momentum theory (BEM) introduced by C.N.H. Lock \emph{et al.} and formulated in its modern form by H. Glauert provides a framework to model the aerodynamic interaction between a turbine and a fluid flow. This theory is either used to estimate turbine efficiency or as a design aid. However, a lack of mathematical interpretation limits the understanding of some of its \rCCCom{issues}. The aim of this paper is to propose an analysis of BEM equations.} Our approach is based on a reformulation of Glauert's model which enables us to identify criteria to guarantee the existence of solution(s), \rCom{ 
 analyze the convergence of usual and new (and more efficient) solution algorithms 
and study turbine design procedures}. The mathematical analysis is 
 completed
 by numerical experiments.
\end{abstract}

\begin{keywords}
{Turbine design, Blade Element Momentum theory, Computational Fluid Dynamics, Geometry Modeling, Wind Turbine Aerodynamics, Fluid–Structure Interaction}
\end{keywords}

\begin{AMS}
76G25, 76M99, 65Z05
\end{AMS}

\section{Introduction}
\CCCom{Initially introduced to study propellers, } the \textit{Blade Element Momentum} (BEM) \textit{theory} is a model used to evaluate the performance of a propelling or extracting turbine on the basis of its mechanical and geometric parameters as well as the characteristics of the interacting flow. This model results from the combination of two theories: the \textit{Blade Element Theory} and the \textit{Momentum Theory}. The former was introduced by William Froude~\cite{Froude} in 1878 to study the 
 turbines from a local point of view. In this framework, the turbine blade is cut into sections, the \textit{blade elements}, each of them being approximated by a planar model. This approach results in expressions of the forces exerted on the blade element, as functions of the flow characteristics and blade geometry. The fundamental quantities of this model are two experimental coefficients  (usually denoted by $C_L$ and $C_D$), called \textit{lift} and \textit{drag  coefficients}, which account for the forces in the cross-section as functions of the \textit{angle of attack}, i.e. the relative angle between the rotating blade and flow. The results are then integrated along the blade to obtain global values. 

The Momentum Theory, also known as \textit{Disk Actuator Theory} or \textit{Axial Momentum Theory}, was introduced by William J. M. Rankine in 1865~\cite{Rankine} and is, unlike the Blade Element Theory, a theory that adopts a macroscopic point of view  to model the behavior of a column of fluid passing through a turbine.  \CCCom{This approach was later taken up independently first by Nikolay Joukowsky~\cite{Joukowski2,Joukowski3} \rCCCom{(see~\cite{Jouko1})}, Frederick W. Lanchester~\cite{Lanchester} and Albert Betz~\cite{Betz}  to formulate \textit{Betz-Joukowsky Limit}}, which gives the theoretical optimal efficiency of a thin rotor, \CCCom{see~\cite{vanKuikpaper} and~\cite{Okulov2012} for  a description of the three derivations obtained by these authors}. 
A combination of these two approaches was carried out \CCCom{in 1925 by C.N.H. Lock \emph{et. al.}~\cite{Lock1925} and formalized in its modern form in 1926 by Hermann Glauert~\cite{Glauert}}, who also refined the Momentum Theory by including the rotation of the fluid induced by its interaction with the turbine. 

The resulting Blade Element Momentum theory is thus based on two decompositions: (i) a radial	 decomposition of the blades and the fluid column, considered as concentric rings that do not interact with each other, and (ii) a decomposition of the fluid/turbine system into a  macroscopic part via Momentum Theory and a local planar part via Blade Element Theory. Such a description of Glauert's theory is given in the monographs~\cite{Branlard,Burton,Hansen,Manwell,Schaffarczyk,Sorensen,Spera}, see for example
~\cite[p.56]{Hansen} :

{\it``The Blade Element Momentum method couples the momentum theory with
the local events taking place at the actual blades. The stream tube introduced
in the 1-D momentum theory is discretized into $N$ annular elements''}

Though old, Glauert's model is still currently used to evaluate turbine efficiency, as indicated in~\cite[p.23]{Sorensen}:

\textit{``Although a variety of correction models have been
developed since then [...], 
 the momentum theory by
Glauert still remains one \rCCCom{[of the]} the most popular.'' }\\
This longevity can be partly explained by the relative simplicity of the approach compared to the complex phenomenon that develops in the coupled turbine/fluid system. 
This time dependent 3D-fluid/structure interaction problem is a major modeling challenge, which BEM reduces to \Com{0D computations with the help of 2D static data, namely the above mentioned lift and drag coefficients.
 Indeed, these coefficients are obtained by} solving (2D-)partial differential equations, typically stationary Navier-Stokes, or, more often than not, of using experimental data from wind tunnel profile tests. Let us add that the numerical efficiency of this method is all the more crucial as turbine models are mostly implemented as part of design procedures, through iterative optimization loops. In practice, this means that equations have to be solved many times and only simple formulations such as BEM  allows the designer to carry out the computations to a satisfactory point, or at least to provide a good enough initial guess for a finer design, as pointed out in~\cite[p.2]{Ning}:

{\it ``Blade element momentum theory continues to be widely used for wind turbine applications such
as initial aerodynamic analysis, conceptual design, loads and stability analysis, and controls design.''}

\CCCom{Note that other models have been proposed, based on the pioneering work of Joukowsky~\cite{Joukowski}, where the axial wake velocity is assumed to be constant. 
 We refer to~\cite{Jouko2,VanKuik,Jouko1} for an extensive presentation of this model with an historical perspective.}

The significant increase in computational power as well as the theoretical advances obtained in the field of fluid-structure interaction simulation now also make it possible to simulate 3D models based on the Navier-Stokes equation~\cite{Bazilevs,Bazilevs2,Hsu}. \Com{ Alternatively,   BEM model has been combined with other approaches, as Lagrangian
stochastic solvers~\cite{Bossy}, multiple vortex cylinder models~\cite{Branlard_2015} or adapted scaling leading to grid-based variant BEM for large rotors~\cite{Madsen}. 
In view of numerical solving, BEM equations have been reduced to one scalar equation in~\cite{Ning_2014}. A summary of this approach is given in Section~\ref{Sec:Sim_mod}
.} 
Note finally that similar theories have been developed to model vertical axis turbines (of Darrieus type)~\cite{Ingram}.

The purpose of this article is to analyze the Blade element momentum theory 
 from a mathematical point of view. 
The results we obtain in the course of this analysis elucidate issues related to the well-posedness of the model, the numerical solution of its equations and the optimality of a blade design. 
 They concern two versions of Glauert's model, which we call \textit{Simplified model} and \textit{Corrected model}. The former allows us to illustrate the main features of our approach, whereas the latter  includes some corrections usually considered to remedy the mismatch between the simplified model and experimental observations.

The paper is structured as follows: Section~\ref{sec:bem} provides a brief exposition of the derivation of the model. We then focus on the resulting algebraic system. 
The key point of our analysis is to look deeper into Glauert's macroscopic-local decomposition to reformulate these equations into a single equation containing two very distinct terms: a universal term, independent of the turbine under consideration and associated with the macroscopic part of the model, and an experimental term, which depends on the characteristics of the blades and is associated with the local part of the model. In this context, we show that solving the equations associated with Glauert's model actually means finding an angular value that makes these two terms equal. 
This result agrees with some implicit conclusions reported before   
  that were not formalized mathematically and rather used for pedagogic purpose, such as 
in~\cite[Figure 3.27, p.126]{Manwell}. 
In contrast, our analysis 
 gives rise to new theoretical and numerical results. In Section~\ref{Sec:existence}, our reformulation enables us to identify explicitly which   
 assumptions related to the turbine parameters can guarantee the existence of a solution.
 In addition, we obtain 
a classification of multiple solution cases based on the modeling assumptions. 
In Section~\ref{Sec:algo}, we present the usual solution algorithm\ComM{s} and derive from our approach more efficient procedures.
As already mentioned, BEM is also used in turbine design, where the simplified model is included in a specific optimization method.
We recall the details of the resulting procedure in Section~\ref{Sec:opti} and describe an optimization algorithm for the corrected model.
We finally present some numerical experiments in Section~\ref{numres}. 

Our results are based on assumptions related to physical parameters, e.g., on the coefficients $C_D$ and $C_L$. 
We do not claim that these assumptions are necessary. However, we treat $C_D$ and $C_L$ as generic functions (endowed with general properties), hence it is often possible to find examples which make our assumptions optimal.

In what follows, we denote by $\R^+$ and $\R^-$  the sets  of  positive real numbers  and non-positive real numbers, respectively.

\section{The blade element momentum theory}\label{sec:bem}

In this section, we present the model proposed by Glauert to describe the interaction between a turbine and a flow. After having introduced the relevant variables, we recall the main steps of the reasoning leading to the equations of the model. We then detail the two versions of the model considered in this paper.

\subsection{Variables}

The \rCCCom{blade element momentum theory} 
aims to establish algebraic relations that characterize the interaction between a flow and a rotating blade, named \textit{turbine} in what follows. In this way, Glauert's model couples two descriptions: a global macroscopic model that describes the evolution of fluids rings crossing the turbine,  and a local one, that summarizes in 2D the behavior of a section of a blade, \textit{a blade element}, under the action of the fluid.

\Com{The flow is supposed to be constant in time and incompressible.} The latter assumption implies that the flow velocities in the left and right neighborhoods of the turbine have a same value $U_{0}$. We denote by $U_{-\infty}$ and $U_{+\infty}$ the upstream and downstream velocities, respectively. \Com{Though not considered in this paper, tangential velocity can also be studied. In particular, a jump of this variable caused by the actuator disk is often reported and can be modeled in the framework of the momentum theory~\cite[Chap. 9]{Branlard}.}
As the BEM model does not take into account interactions between blade elements \Com{and assumes that $\Omega$ and $U_\infty$ are constant},  we consider in this paper 
a fixed blade element and 
a fixed value of \CCCom{the \emph{local speed ratio} 
$\lambda_{\CCCom{r}}:= \frac{\Omega r}{U_{-\infty}}.$ 
 Here, $r$ is the distance of the element to the rotation axis, with $r\leq R$, where $R$ is the radius of the blade.  
In practical cases, the turbine works \emph{at constant Tip Speed Ratio (TSR)}: 
 $\Omega$ is indeed often controlled through the torque exerted by a generator in such 
a way that the ratio TSR$:=\frac{\Omega R}{U_{-\infty}}$ is kept constant for various values of ${U_{-\infty}}$}. It follows that the value of $\l$ associated with one element only depends on $r$. In the sequel, we consequently use the variable $\l$ to describe the location of a blade element.

\subsubsection{Macroscopic variables and BEM unknowns}

Glauert's model ultimately consists of a system which links together three variables $a,a'$ and $\varphi$ associated with a ring of fluid. 
The two former are usually called the {\it axial} and {\it angular induction factor}, respectively. They are defined by
\begin{equation}\label{defa}
a := \frac{U_{-\infty} - U_{0}}{U_{-\infty}},\quad a':=\frac{\omega}{2\Omega},
\end{equation}
where $\omega$ is the rotation speed of the considered ring of fluid. 
 The angle $\varphi$ is  the \textit{relative angle deviation} (see~\cite[p.120]{Manwell}) of the ring, meaning that:

\begin{equation}
\tan\varphi=\frac{1-a}{\lambda_{\CCCom{r}}(1+a')}.\label{Eq1}
\end{equation}
For the sake of simplicity, and to emphasize their role of unknowns in Glauert's model, we omit the dependence of $a,a',\varphi$ (and $\alpha$ in what follows) 
on $\lambda_{\CCCom{r}}$ in the notation. 

\subsubsection{Local variables}\label{locvar}
Let us denote by $U_{rel}$ the relative fluid speed (also called \textit{apparent fluid speed}) perceived from this blade element while rotating. By definition of $\varphi$, we have:
\begin{equation}\label{eq:urel}
U_{rel}=\frac{U_0}{\sin\varphi}.
\end{equation}
\Com{This variable is not defined when $\varphi=0$ and, as an intermediate quantity, will not appear in the final model. However, the limit case $\varphi\rightarrow 0$ is discussed in Section~\ref{Sec:correc_model}.}
For a given blade profile, the {\it lift} and {\it drag coefficients} $C_L$ and $C_D$  are defined by 
\begin{equation}\label{dLdT}
dL= C_L(\alpha)\frac 12 \rho U_{rel}^2 c_\l dr, \qquad dD= C_D(\alpha)\frac 12 \rho U_{rel}^2 c_\l dr,
\end{equation}
where $\rho$ is the mass density of the fluid, $dL$ and $dD$ are the elementary lift and drag forces applying to a blade element of thickness $dr$ and of chord \Com{$c_\l$}.
The parameter $\alpha$ is called {\it angle of attack} and defined as the angle between the chord and flow direction, hence satisfies the relation 
\begin{equation}\label{eq:def_alpha}
\alpha=\varphi - \gamma_\l, 
\end{equation}
where $-\pi/2 < \gamma_\l < \pi/2$ 
is the {\it twist} (also called \textit{local pitch}) angle of the blade. The parameters associated with a blade element are summarized in Figure~\ref{fig:angles}. 


\begin{figure}
\centering
\input{figure_angle3}
\caption{Blade element profile and associated  angles, velocities and forces.}\label{fig:angles}
\end{figure}

The coefficients $C_L$ and $C_D$ correspond to the ratio between the lift and drag forces and the dynamic force, i.e., the force associated with the observed kinetic energy. They are determined by the profile of the blade. Once this one is fixed, the main design parameters are $c_\l$ and $\gamma_\l$ whose optimization is discussed in Section~\ref{Sec:opti}.


The coefficients $C_L$ and $C_D$ are assumed to be known as functions of $\alpha$ and occasionally of Reynolds number ($Re$). 
 The latter case is indeed rarely considered in the monographs, where $Re$ is assumed to be constant with respect to $\alpha$ as soon as $U_{-\infty}$, $\l$, $\Omega$, $c_\l$ are fixed. For the sake of simplicity, we also neglect the changes in $Re$ in this paper. However, our results can be extended to non-constant Reynolds numbers~\cite[p.374-375]{Spera}, i.e. in situations where the functions $(\alpha,Re)\mapsto C_L(\alpha,Re)$ and $\alpha\mapsto C_D(\alpha,Re)$ have to be taken into account. \Com{ In practice, $Re$ is \rCCCom{either assumed to be known a priori and used to select the corresponding $C_L$ and $C_D$, or dealt with iteratively together with $C_L$ and $C_D$ to get a more accurate result}. } 
Examples of variations of $C_L$ and $C_D$ with respect to $Re$ are given in~\cite[p.169]{Burton}.

Though changing from one profile to another, the behaviors of $C_L$ and $C_D$ as functions of $\alpha$ can be described qualitatively in a general way. 
The coefficient $C_L$ usually increases nearly linearly with respect to $\alpha$ up to a given critical angle $\alpha_s$, with $0<\alpha_s<\pi/2$, where the so-called \textit{stall} phenomenon occurs: $C_L$ then decreases rapidly (see, e.g.,~\cite[p.93-94]{Branlard} and~\cite[p.375]{Spera}), causing a sudden loss of lift.
For $C_D$ is associated with a drag force, it is always positive and defined for all angles. 
 This coefficient usually slightly increases with $\alpha$ up to $\alpha = \alpha_s$, and then becomes very large. \Com{
Though most designs do not prevent the inner part of the blade to generate stall, the condition $\varphi-\gamma_\l<\alpha_s$ is often considered in the blade design phase.} \CCom{Note finally that the greater part of our analysis applies for angles of attack such that $C_L$ is positive. }
The properties of $C_L$ and $C_D$ required for our analysis are summarized in the following assumption.
\begin{assumption}\label{assume:gal}
\CCom{For some $\beta\in 
\R^+$
, }the function $\alpha\mapsto C_L(\alpha)$ is continuous on 
 $I_\beta:=[-\beta,\beta]$,
and positive on $I_\beta\cap\R^+$. The function $\alpha\mapsto C_D(\alpha)$ is defined, continuous and non-negative on $\R$.
\end{assumption}
\Com{As a consequence, $C_L(0)$ is assumed to be positive, which is equivalent in practice to the fact that the angle of attack corresponding to zero lift is negative. This assumption is true for usual designs.}

\subsection{Glauert's modeling}
For the sake of completeness, we now shortly recall the reasoning proposed by Glauert to model the interaction  between a turbine and a flow. We refer to~\cite[Chap.~3]{Burton}, for a more extended presentation of this theory. 
We denote by $dT$ and $dQ$ the infinitesimal thrust and torque that apply on the blade element of thickness $dr$ under consideration.

\subsubsection{Macroscopic approach}
The first part of the model is related to the Momentum Theory and deals with the macroscopic evolution of a ring of fluid. It aims to express $dT$ and $dQ$ in terms of $a,a'$ and $\varphi$.

Denote by $p_-$ and $p_+$ the fluid pressures on the left and right neighborhoods of the blade, respectively. Applying Bernouilli's relation between $-\infty$ and $0^-$ and between  
$0^+$ and $+\infty$ gives rise to $p_- - p_+ = \frac 12\rho(U_{-\infty}^2-U_{+\infty}^2 )$.
Considering then the rate of change of momentum on both sides of the turbine, we get a second expression for the variation in the pressure, namely $p_- - p_+ = \rho(U_{-\infty}-U_{+\infty} )U_0$.
Combining the two previous equations and using~\eqref{defa}, we obtain $U_{+\infty}=(1-2a)U_{-\infty}$.
Since  \CCom{$dT=(p_- - p_+) 2\pi r   dr$ } and $dQ =  \omega \rho U_0 2\pi r^3dr$, we finally get
\begin{align}
dT&=   C_T(a) U_{-\infty}^2\rho \pi r          dr,\label{defT1}\\
dQ&=   4a'(1-a) \lambda_{\CCCom{r}} U_{-\infty}^2  \rho \pi r^2  dr.\label{defQ1}
\end{align}
where 
$
C_T(a):=\frac{dT}{\frac 12 U_{-\infty}^2\rho 2\pi r dr}=4a(1-a),$
is the {\it local thrust coefficient}~\cite{Wilson}.

\subsubsection{Local approach}\label{sec:locexpr}
Another set of equations can be obtained via the Blade Element Theory, where local expressions for infinitesimal thrust and torque are considered.  
The reasoning consists in combining the elementary lift and drag~\eqref{dLdT} expressed in the rotating referential  with~\eqref{eq:urel}. This gives
\begin{align}
dT&=   \sigma_\l\frac{ (1-a)^2}{\sin^2\varphi} \left(C_L(\alpha)\cos\varphi + C_D(\alpha)\sin\varphi\right) U_{-\infty}^2\rho \pi r   dr,\label{defT2} \\
dQ&=   \sigma_\l\frac{ (1-a)^2}{\sin^2\varphi} \left(C_L(\alpha)\sin\varphi - C_D(\alpha)\cos\varphi\right) U_{-\infty}^2\rho \pi r^2 dr,\label{defQ2}
\end{align}
where $\sigma_\l:=\frac{Bc_\l}{2\pi r}$, with $B$ is the number of blades of the turbine.


\subsubsection{Combination of local and global approaches }
To get a closed system of equations, Glauert combined the results of the two last subsections. More precisely, equating~\eqref{defT1} and~\eqref{defQ1}  with~\eqref{defT2} and~\eqref{defQ2}, respectively, using~\eqref{eq:def_alpha} and dividing both resulting equations by $4(1-a)^2$ gives
\begin{align}
\frac{a }{1-a}&=\frac{\sigma_\l}{4        \sin^2\varphi}\left(C_L(\varphi-\gamma_\l)\cos\varphi + C_D(\varphi-\gamma_\l) \sin\varphi\right) \label{Eq2},\\
\frac{a'}{1-a}&=\frac{\sigma_\l}{4\lambda_{\CCCom{r}} \sin^2\varphi}\left(C_L(\varphi-\gamma_\l)\sin\varphi - C_D(\varphi-\gamma_\l) \cos\varphi\right) \label{Eq3}.
\end{align}
The system obtained by assembling~\eqref{Eq1},~\eqref{Eq2} and~\eqref{Eq3} is the basis of Glauert's Blade Element Momentum theory.

\subsection{Simplified model}

In the monographs devoted to aerodynamics of wind turbines, the contribution of $C_D$ is sometimes set to zero.  This point is discussed in~\cite[p.135]{Sorensen}, where it is particular stated:

\textit{``Since the drag force does not contribute to the induced velocity physically, $C_D$
is usually omitted when calculating induced velocities.''}.\\
In the same way, Manwell and co-authors mention in~\cite[p.125]{Manwell}:

\textit{``In the calculation of induction factors,[...] accepted practice is to set $C_D$ equal to zero
[...]. For airfoils with low drag coefficients, this simplification
introduces negligible errors.''}\\
This assumption is actually justified in many cases, since the procedures used to design profiles minimize their drag. 
As a matter of fact, the usual blade design procedure starts by selecting a twist angle $\gamma_\l$ minimizing the ratio $\frac{C_D}{C_L}$, see Section~\ref{sec:design}.


We also consider the case where $C_D=0$, and referred to as {\it simplified model} in the following. In view of~\eqref{Eq1}, \eqref{Eq2} and~\eqref{Eq3}, it corresponds to the three equations:

\begin{align}
\tan\varphi&=\frac{1-a}{\lambda_{\CCCom{r}}(1+a')}, \label{Standard:Eq1}\\
\frac{a }{1-a}&=\frac{1}{\sin^2\varphi} \mu_L(\varphi)\cos\varphi,  \label{Standard:Eq2}\\
\frac{a'}{1-a}&=\frac{1}{\lambda_{\CCCom{r}} \sin\varphi} \mu_L(\varphi) \label{Standard:Eq3},
\end{align}
where we have introduced the dimensionless function
 $\mu_L(\varphi):=\frac{\sigma_\l}{4 } C_L(\varphi-\gamma_{\monlambda})$,
which is defined on 
 $I_{\beta,\gamma_\l}:=[-\beta+\gamma_\l,\beta+\gamma_\l]$, 
 by virtue of Assumption~\ref{assume:gal}.

\subsection{Corrected model}\label{Sec:correc_model}

To get closer to experimental results, many  modifications of the model (\ref{Eq1},\ref{Eq2},\ref{Eq3}) have been introduced, see e.g.~\cite[Chapter 7]{Sorensen}.  Hereafter, we present three important corrections, namely non-zero drag coefficient $C_D$, tip loss correction and a specific treatment of large values of $a$. The first and the last will modify significantly the analysis developed for the simplified model.


\subsubsection{Slowly increasing drag}\label{Slow_inc_Drag}

In addition to consider $C_D$ strictly positive, we shall assume in some parts of the analysis 
a slow increasing of this parameter from \rComM{$0$ up to} the occurrence of the stall phenomenon. 

\subsubsection{Tip loss correction}






The equations of Momentum Theory are derived assuming that \Com{the turbine can be modeled as an actuator disk}.
Such a framework corresponds to a rotor with an infinite number of blades.
However, in real life situations, a modification of the flow at the tip of a blade has to be included to take into account that the circulation of the fluid around the blade must go down (exponentially) to zero when $r\rightarrow R$, where $R$ is the turbine radius. In this way, 
Glauert (see~\cite[p.268]{Glauert3}) introduced \CCCom{an approximation of  the Prandtl tip function $F_\l$~\cite{Prandtl} (see also~\cite[p.240]{Branlard})}:
\begin{equation*}
F_\l(\varphi):=\frac{2}{\pi} \CCom{\arccos}\left( \exp(-\frac{B/2(1-\frac{\l U_{-\infty}}{\Omega R})}{(\frac{\l U_{-\infty}}{\Omega R})\sin\varphi}) \right)=\frac{2}{\pi} \CCom{\arccos}\left( \exp(-\frac{B/2(1-r/R)}{(r/R)\sin\varphi}) \right), 
\end{equation*}
as a supplementary factor in~\eqref{defT1} and~\eqref{defQ1}. This modification gives rise to
\begin{align}
dT&=   4a (1-a)F_\l(\varphi) U_{-\infty}^2 \rho \pi r          dr,\label{defT1_tip}\\
dQ&=   4a'(1-a)F_\l(\varphi) U_{-\infty}   \rho \pi r^3 \Omega dr.\label{defQ1_tip}
\end{align}
\Com{
\rCCCom{Note that some authors include the tip loss factor only in~\eqref{defT1}, see~\cite{Clifton}}. The results of our paper can readily be adapted to this version of the model. }
Further models of tip loss correction have been introduced in between.   
\CCCom{We refer to~\cite[Chap. 13]{Branlard}, and~\cite{Shen} for reviews of other models. An alternative approach based on Extended vortex theory has also been proposed in~\cite{Wood}.}


\subsubsection{Correction for high values of $a$}\label{sec:corr_a}
 For induction factors $a$ larger than about $0.4$ (see \cite[p.297]{Spera}), a turbulent wake usually appears, and it is broadly considered that momentum theory does not apply. This fact was already reported by Glauert (see~\cite{Glauert26}
), who proposed to modify $C_T(a)$ in~\eqref{defT1} when $a$ becomes larger than a given threshold $a_c$. 
Subsequently, many other expressions have been proposed to fit better with experimental data, see~\cite[Section 10.2.2]{Branlard}.
All these variants lead to a new expression for $dT$ that reads
\begin{equation}
dT=   4\left(a(1-a) + \psi\left( (a-a_c)_+\right)\right)F_\l(\varphi) U_{-\infty}^2 \rho \pi r          dr\label{defT1Gen}.
\end{equation}
where $(a-a_c)_+ :=\max\{0,a-a_c\}$ and $\psi$ is a given function defined on $\R^+$.
Some corrections are presented via the function $\psi$ in Table~\ref{Table_correc}. Glauert's empirical correction is obtained by combining experimental data and the constraints $C_T(a_c)=4a_c(1-a_c)$ and $C_T(1)=2$. This leads to a discontinuity at $a=a_c$ when  $F_\l(\varphi)\neq 1$. 
 Buhl proposed in~\cite{Buhl} a modification to fix this issue.

\begin{table}[h!]
\begin{center}
\begin{tabular}{|c||c|c|}\hline
Authors & $a_c$ & $\psi((a-a_c)_+)$ \\ \hline\hline  
\begin{tabular}{cc}
Glauert \cite{Glauert3},
\cite[p.53]{Hansen} 
\end{tabular} & 
1/3	& 
$\dfrac{(a-a_c)_+}{4}\left( \dfrac{(a-a_c)^2_+}{a_c} + 2(a-a_c)_+ + a_c \right)$	
\\ \hline
\begin{tabular}{@{}l@{}} 
 Empirical Glauert   \\ 
\cite{Eggleston,Buhl} 
\end{tabular} & 
2/5 		&  
$\approx \left(\frac{1}{2(1-a_c)}-\frac\nu 4(1-a)\right)(a-a_c)_+$ with $\nu=5.5563$
\\ \hline
Buhl	\cite{Buhl}	& 
2/5	& 
$\dfrac 1{2F_{\l}(\varphi)}\left(\dfrac{(a-a_c)_+}{1-a_c}\right)^2$ 
\\ \hline  
\begin{tabular}{@{}l@{}}
	Wilson et al., \\ Spera  \cite[p.302]{Spera}
\end{tabular} 
& 
1/3	& 
$(a-a_c)_+^2$
\\ \hline
\end{tabular}
\caption{\rCCCom{Various corrections that can be introduced in the relation between the thrust
coefficient and the induction
factor~\eqref{defT1_tip} in case of turbulent state.
}
}\label{Table_correc}
\end{center}
\end{table}
\subsubsection{On 3D effects}
\CCom{

In order to take into account 3D effects of the rotating blade, correction formulas 
have been proposed~\cite{snel1993, Du1998, Hansen2000} (see the extensive review in~\cite{Sorensen2011}). 
As an example, the following expression can be used to correct $C_L$:
\[C_L^c=C_L+a (\frac {c_\l} r)^b \Delta C_L,\]
where $a\in [2,3]$ and $b\in [1,2]$ are constants and $\Delta C_L=C_{L,inv}-C_L$. In the latter expression, $C_{L,inv}$ is the lift obtained by considering an inviscid flow, i.e., by solving a Laplace problem. A similar expression
can be used for the drag coefficient \rComM{$C_D$}. For simplicity, we do not consider 3D effects in our study, but all the following results can be readily adapted by replacing $C_L$ and $C_D$ by their corrected expressions, as done in the next section with tip-loss correction.
Note finally that the 3D correction presented in~\cite{Du1998} is included in the model considered in Section~\ref{numNL}.
}

\subsubsection{Corrected system}

We repeat the reasoning used to obtain~(\ref{Standard:Eq1}--\ref{Standard:Eq3}), that is, we equalize~\eqref{defT1Gen} and~\eqref{defQ1_tip} with~\eqref{defT2} and~\eqref{defQ2}, respectively. We get:
\begin{align}
\tan\varphi&=\frac{1-a}{\lambda_{\CCCom{r}}(1+a')}, \label{Correc:Eq1}\\
\frac{a }{1-a}&=\frac{ 1}{\sin^2\varphi} (\mu^c_L(\varphi)\cos\varphi + \mu^c_D(\varphi) \sin\varphi)-\frac{\psi\left( (a-a_c)_+\right)}{(1-a)^2},\label{Correc:Eq2}\\
\frac{a'}{1-a}&=\frac{ 1}{\lambda_{\CCCom{r}} \sin^2\varphi} (\mu^c_L(\varphi)\sin\varphi - \mu^c_D(\varphi) \cos\varphi)\label{Correc:Eq3},
\end{align} 
where we have introduced the dimensionless functions 
$\mu^c_L(\varphi):=\frac{\sigma_\l}{4 F_\l(\varphi)} C_L(\varphi-\gamma_{\monlambda})$, and $\mu^c_D(\varphi):=\frac{\sigma_\l}{4 F_\l(\varphi)} C_D(\varphi-\gamma_{\monlambda})$,
defined on $I_{\beta,\gamma_\l}$ 
 and $\R$, respectively. 
 \ComM{The corrected and simplified models coincide} when $F_\l(\varphi)=1$, $a_c=1$ and $C_D=0$. 
\ComM{\begin{remark}
Combining~\eqref{Correc:Eq3} with~\eqref{Correc:Eq1}, one obtains
\begin{equation}
\frac{a'}{1+a'}=\frac{ 1}{ \sin\varphi  \cos\varphi} (\mu^c_L(\varphi)\sin\varphi - \mu^c_D(\varphi) \cos\varphi)\label{Correc:Eq3_alter},
\end{equation}
which is sometimes considered instead of~\eqref{Correc:Eq3} to define $a'$.
\end{remark}
}

\section{Analysis of Glauert's model and existence of solution}\label{Sec:existence}
In this section, we reduce each of the two previous versions of Glauert's model to a single scalar equation. With a view to obtaining existence results,
this leads us to formulate assumptions related to the characteristics of the turbine.
To simplify notation, we introduce the angle $\theta_{\monlambda} \in (0,\frac\pi 2)$ defined by 
\begin{equation}\label{def:theta_lambda}
\tan\theta_{\monlambda}:=\frac 1\lambda_{\CCCom{r}},
\end{equation}
and the intervals
\begin{equation}
I:=I_{\beta,\gamma_\l}\cap(-\frac{\pi}2+\theta_\l,\frac{\pi}2+\theta_\l),\quad
I^+:=I\cap(0,\theta_\l].\label{defI}
\end{equation}
\Com{We comment about these definitions in the next section.}







\subsection{Simplified model}\label{Sec:Sim_mod}
In the setting of the simplified model, a reformulation of~(\ref{Standard:Eq1}--\ref{Standard:Eq3}) can be obtained after a short algebraic manipulation.
\begin{theorem}\label{lem:reform_standard}
Suppose that Assumption~\ref{assume:gal} holds and that $(\varphi,a,a')\in I-\{0,\frac{\pi}2\}\times\R-\{1\}\times\R-\{-1\}$
satisfies~(\ref{Standard:Eq1}--\ref{Standard:Eq3}).
Then $\varphi$ satisfies 
\begin{equation}\label{eq:mu_mu}
\mu_L(\varphi)=\mu_G(\varphi),
\end{equation}
where 
$\mu_G(\varphi):=\sin\varphi\tan(\theta_{\monlambda}-\varphi)$.
Reciprocally, suppose that $\varphi\in I-\{0,\frac{\pi}2\}$ 
satisfies~\eqref{eq:mu_mu} and define $a$ and $a'$ as the corresponding solutions of~\eqref{Standard:Eq2} and \eqref{Standard:Eq3}, respectively. Then
$(\varphi,a,a')\in I-\{0,\frac{\pi}2\}\times\R-\{1\}\times\R-\{-1\}$ 
 satisfies~(\ref{Standard:Eq1}--\ref{Standard:Eq3}).
\end{theorem}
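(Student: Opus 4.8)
The plan is to prove both implications through a single chain of reversible algebraic identities, so that the forward and reciprocal statements amount to reading the same computation in two directions. Writing $m:=\mu_L(\varphi)$, I would first record that Equations~\eqref{Standard:Eq2} and~\eqref{Standard:Eq3} express $\frac{a}{1-a}$ and $\frac{a'}{1-a}$ directly in terms of $m$ and $\varphi$; since $\varphi\notin\{0,\tfrac\pi2\}$ guarantees $\sin\varphi\neq0$ and $\cos\varphi\neq0$, these right-hand sides are well defined.

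For the forward direction, the idea is to eliminate the experimental quantity $m$. Taking the quotient of~\eqref{Standard:Eq2} by~\eqref{Standard:Eq3} cancels $m$ and yields $\frac{a}{a'}=\lambda\cot\varphi$, i.e. $a'=\frac{a\tan\varphi}{\lambda}$. Substituting this into~\eqref{Standard:Eq1} removes $a'$ and leaves a single affine equation for $a$, whose solution is $a=(1-\lambda\tan\varphi)\cos^2\varphi$. Feeding this value back into~\eqref{Standard:Eq2} then produces a closed-form expression for $m$, namely $m=\sin\varphi\,\frac{\cos\varphi-\lambda\sin\varphi}{\sin\varphi+\lambda\cos\varphi}$. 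The final step is purely trigonometric: dividing numerator and denominator by $\cos\varphi$ and invoking the tangent-difference formula together with $\tan\theta_\lambda=\frac1\lambda$ identifies this fraction as $\tan(\theta_\lambda-\varphi)$, so that $m=\sin\varphi\tan(\theta_\lambda-\varphi)=\mu_G(\varphi)$, which is exactly~\eqref{eq:mu_mu}.

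For the converse, I would start from $\mu_L(\varphi)=\mu_G(\varphi)$ and define $a$ and $a'$ by solving~\eqref{Standard:Eq2} and~\eqref{Standard:Eq3}; solving~\eqref{Standard:Eq2} for $a$ is legitimate as long as the resulting denominator does not vanish, which is equivalent to $a\neq1$. The clean way to check all the exclusions at once is via the factorisations $1-a=\sin\varphi\,(\sin\varphi+\lambda\cos\varphi)$ and $\lambda+a\tan\varphi=\cos\varphi\,(\sin\varphi+\lambda\cos\varphi)$, the latter controlling $1+a'=\frac{\lambda+a\tan\varphi}{\lambda}$. Both reduce to the single quantity $\sin\varphi+\lambda\cos\varphi=\sqrt{1+\lambda^2}\,\cos(\varphi-\theta_\lambda)$, and the point is that the definition~\eqref{defI} of $I$ keeps $\varphi-\theta_\lambda$ strictly inside $(-\tfrac\pi2,\tfrac\pi2)$, so this factor is nonzero on $I$. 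This simultaneously gives $a\neq1$ and $a'\neq-1$, and running the chain of the forward direction backwards recovers~\eqref{Standard:Eq1}, placing $(\varphi,a,a')$ in $I-\{0,\tfrac\pi2\}\times\R-\{1\}\times\R-\{-1\}$.

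The genuinely delicate part is not the algebra, which is elementary once $m$ is eliminated, but the bookkeeping of degenerate cases: one must be sure that every division performed is licit and that the constructed pair $(a,a')$ lands in $\R-\{1\}\times\R-\{-1\}$. This is exactly where the excluded values $\{0,\tfrac\pi2\}$ and the choice of the interval $I$ around $\theta_\lambda$ (equivalently, the nonvanishing of $\cos(\varphi-\theta_\lambda)$) are used, and verifying these nonvanishing conditions cleanly is the main obstacle I would anticipate.
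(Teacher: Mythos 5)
Your argument is correct, and it reaches the identity $\mu_L(\varphi)=\mu_G(\varphi)$ by a genuinely different elimination order than the paper. The paper's proof is a two-line computation: it writes $\tan^{-1}\varphi=\lambda\frac{1+a'}{1-a}=\lambda\bigl(1+\frac{a}{1-a}\bigr)+\lambda\frac{a'}{1-a}$ and substitutes~\eqref{Standard:Eq2}--\eqref{Standard:Eq3} directly, so that $\mu_L(\varphi)$ is isolated in one step and $a$, $a'$ never appear explicitly; the converse is dispatched with ``repeating these steps backward.'' You instead eliminate $\mu_L$ first (the ratio of~\eqref{Standard:Eq2} to~\eqref{Standard:Eq3} giving $\lambda a'=a\tan\varphi$ --- better phrased as a cross-multiplication so that the case $\mu_L(\varphi)=0$, hence $a=a'=0$, needs no separate mention), solve~\eqref{Standard:Eq1} for $a=(1-\lambda\tan\varphi)\cos^2\varphi$, and only then back out $\mu_L$. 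This is longer, but it buys two things the paper leaves implicit: the closed forms for $a$ and $a'$ (which are exactly the expressions the paper later invokes in Section~\ref{sec:design}), and an honest verification of the reciprocal direction, since your factorizations $1-a=\sin\varphi\,(\sin\varphi+\lambda\cos\varphi)$ and $1+a'=\frac{\cos\varphi(\sin\varphi+\lambda\cos\varphi)}{\lambda}$, together with $\sin\varphi+\lambda\cos\varphi=\sqrt{1+\lambda^2}\,\cos(\varphi-\theta_\lambda)\neq 0$ on $I$, are precisely what is needed to certify $a\neq 1$ and $a'\neq -1$ and hence that~\eqref{Standard:Eq1} can be reconstituted. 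The two proofs are equally elementary; yours is the more complete record of why the excluded values and the interval $I$ suffice.
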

Note that~\eqref{eq:mu_mu} appears -- up to a factor -- in~\cite[p.128, Fig. 3.85a]{Manwell}. 
Some concrete examples of graphs of $\mu_L$ and $\mu_G$ are given in Section~\ref{numintro}.

We have excluded the angles $\varphi=\frac{\pi}2$ and $\varphi=0$ for the sole reason that~(\ref{Standard:Eq1}--\ref{Standard:Eq3}) are not defined for these angle values. However, $\varphi=0$ is naturally associated with the case $a=1$, as it appears in~\eqref{Standard:Eq2}. On the other hand, the value $\varphi=\frac{\pi}2$, that belongs to $I$ if $\beta+\gamma_\l>\frac\pi2 > -\beta+\gamma_\l$, is neither a solution of~(\ref{Standard:Eq1}--\ref{Standard:Eq3}) nor of~\eqref{eq:mu_mu}: setting this value in the former system leads indeed to $a'=-1$, $a=0$ and $-\l=\mu_L(\frac{\pi}2)$ which corresponds to a negative lift, hence contradicts $C_L\geq 0$ on $I_\beta\cap\R^+$ in Assumption~\ref{assume:gal}. 
As a matter of fact, $\mu_G$ is well-defined at these values, which generally do not give rise to solution of~\eqref{eq:mu_mu}.
Finally, note that the right-hand side of~\eqref{eq:mu_mu} is not defined in  the values $\varphi=\pm\frac{\pi}2+\theta_\l$. However, they do not correspond to any solution of~(\ref{Standard:Eq1}--\ref{Standard:Eq3}): inserting them in the last equations leads to $\mp \l= 0.\mu_L(\pm\frac\pi2)$ which contradicts $\l=\frac{\Omega r}{U_{-\infty}}>0$. For all these reasons, the formulation~\eqref{eq:mu_mu} will be considered on the whole interval $I$ in the rest of this paper.
\begin{proof}
Suppose that $(\varphi,a,a')\in I\rComM{-\{0,\frac{\pi}2\}}\times\R-\{1\}\times\R-\{-1\}$ 
 satisfies~(\ref{Correc:Eq1}--\ref{Correc:Eq3}). 
Eliminating $a$ and $a'$ in~\eqref{Standard:Eq1} using~\eqref{Standard:Eq2} and~\eqref{Standard:Eq3}, we get:
\begin{align*}
\tan^{-1}\varphi&=\lambda_{\CCCom{r}}\frac{1+a'}{1-a} 
=\lambda_{\CCCom{r}}\left(1+\frac{\cos\varphi}{      \sin^2\varphi} \mu_L(\varphi) \right) + \frac{1}{\sin\varphi} \mu_L(\varphi).
\end{align*}
so that~\eqref{eq:mu_mu} follows from the definition~\eqref{def:theta_lambda} of $\theta_\l$. Repeating these steps backward ends the proof of the equivalence.
\end{proof}

This result shows that Glauert's model -- here in its simplified version -- essentially boils down to one scalar equation. 
Indeed, suppose that $\varphi$ satisfies~\eqref{eq:mu_mu}, then $a$ and $a'$ can be post-computed thanks to~(\ref{Standard:Eq2}--\ref{Standard:Eq3}). 
\Com{ A similar conclusion has also been obtained by Ning in~\cite{Ning}, but his approach results in another equation that is 
\begin{equation}\label{eqNing}
\frac{\sin\varphi}{1-a} - \frac{\cos\varphi}{\lambda_{\CCCom{r}}(1+a')}=0,
\end{equation}
where $a$ and $a'$ read as functions of $\varphi$ by~\eqref{Correc:Eq2} and~\eqref{Correc:Eq3_alter}, respectively. Note that~\eqref{eq:mu_mu} and~\eqref{eqNing} have the same singularities, namely $0$ and $\pi/2$. However, our approach 
gives rise to a specific physical interpretation and mathematical analysis. More precisely,
}
an important property of~\eqref{eq:mu_mu} is that its left-hand side corresponds to the local description of the problem related to Blade Element Theory, whereas its right-hand side is related to the macroscopic modeling arising from Momentum Theory. 
As a consequence, $\mu_G$ reads as a universal function of fluid-turbine dynamics depending only on $\theta_\l$ \Com{and related to Momentum theory. On the contrary, $\mu_L$ reads as a function which strictly depends on  the turbine under consideration, i.e., on its design parameters $\gamma_\l$ or $\sigma_\l$ as well as its 2D experimental data, through $C_L$, hence, rather relates to Blade element theory}. In this view,~\eqref{eq:mu_mu} is in line with the approach considered by Glauert. In the same way, the two intervals defining $I$, namely $I_{\beta,\gamma_\l}$ and $(-\frac{\pi}2+\theta_\l,\frac{\pi}2+\theta_\l)$ play similar roles in the local and in the macroscopic descriptions as they  correspond to the domains of definition of $\mu_G$ and $\mu_L$, respectively, whereas $I_{\beta,\gamma_\l}\cap \R^+$ and $(0,\theta_\l]$, whose intersection $I^+$ corresponds to 
 positive lift in the two descriptions.

The formulation given in Theorem~\ref{lem:reform_standard} can be used to establish criteria to ensure existence of solution of~\eqref{eq:mu_mu}: existence indeed holds as soon as the graphs of $\mu_G$ and $\mu_L$ intersect. As an illustration, we give a simple condition in the case of symmetric profiles. We express the assumptions in terms of $\mu_L$ to make it coherent with the formulation~\eqref{eq:mu_mu} ; they can however easily be formulated in terms of $C_L$ and $\sigma_\l$.

\begin{corollary} \label{th:exist_standard}

In addition to Assumption~\ref{assume:gal}, suppose that \rComM{$\gamma_{\monlambda}\leq \theta_\l$, that}  the profile under consideration is symmetric with $\gamma_\l>0$, and that 
\begin{equation}\label{assum:mu}
\mu_G(\max I^+)\leq\mu_L(\max I^+),
\end{equation}
where $\max I^+:=\min\{\theta_\l,\beta + \gamma_{\monlambda}\}$, see~\eqref{defI}.
Then~\eqref{eq:mu_mu} admits 
a solution corresponding to a positive lift in $[\gamma_{\monlambda},\max I^+]$.
Moreover, if $\max I^+=\theta_\l$, i.e. $\theta_\l\leq\beta + \gamma_{\monlambda}$, then~\eqref{assum:mu} is necessarily satisfied.
\end{corollary}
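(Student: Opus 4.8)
The plan is to establish existence via the intermediate value theorem applied to the continuous function $g:=\mu_L-\mu_G$ on the interval $[\gamma_\l,\max I^+]\subseteq I^+\subseteq I$. Both terms are continuous there: $\mu_L$ by Assumption~\ref{assume:gal}, and $\mu_G$ because on this interval $\theta_\l-\varphi$ ranges in $[0,\theta_\l)\subset(-\tfrac\pi2,\tfrac\pi2)$, so the tangent in its definition is free of singularities. I note that $[\gamma_\l,\max I^+]$ is nonempty precisely when $\gamma_\l\le\theta_\l$ (since $\gamma_\l<\beta+\gamma_\l$ always), and that the nondegenerate regime $\gamma_\l<\theta_\l$ is the relevant one: when $\gamma_\l=\theta_\l$ the interval collapses to the single zero-lift point $\{\gamma_\l\}$, incompatible with the positive-lift conclusion.

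First I would read off the sign of $g$ at the two endpoints. At $\varphi=\gamma_\l$ the angle of attack $\varphi-\gamma_\l$ vanishes; symmetry of the profile gives $C_L(0)=0$, hence $\mu_L(\gamma_\l)=0$, while $\mu_G(\gamma_\l)=\sin\gamma_\l\,\tan(\theta_\l-\gamma_\l)>0$ since $0<\gamma_\l<\theta_\l$. Therefore $g(\gamma_\l)=-\mu_G(\gamma_\l)<0$. At $\varphi=\max I^+$, hypothesis~\eqref{assum:mu} is exactly $g(\max I^+)\ge0$. The intermediate value theorem then produces $\varphi^\star\in[\gamma_\l,\max I^+]$ with $\mu_L(\varphi^\star)=\mu_G(\varphi^\star)$, a solution of~\eqref{eq:mu_mu}.

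To secure the \emph{positive lift} statement, I would check that this zero lies strictly inside $(0,\theta_\l)$, where $\mu_G>0$. The strict inequality $g(\gamma_\l)<0$ forces $\varphi^\star>\gamma_\l>0$. The value $\varphi^\star=\theta_\l$ is also excluded: it would give $\mu_L(\theta_\l)=\mu_G(\theta_\l)=0$, hence $C_L(\theta_\l-\gamma_\l)=0$, contradicting the positivity of $C_L$ on $I_\beta\cap\R^+$ because $\theta_\l-\gamma_\l\in(0,\beta]$ in this case. Thus $\varphi^\star\in(0,\theta_\l)$, so $\mu_G(\varphi^\star)>0$ and therefore $\mu_L(\varphi^\star)>0$, i.e.\ $C_L(\varphi^\star-\gamma_\l)>0$.

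For the last assertion, assume $\max I^+=\theta_\l$, equivalently $\theta_\l\le\beta+\gamma_\l$. Then $\mu_G(\max I^+)=\mu_G(\theta_\l)=\sin\theta_\l\,\tan(\theta_\l-\theta_\l)=0$, whereas $\theta_\l-\gamma_\l\in[0,\beta]$ yields $\mu_L(\theta_\l)=\tfrac{\sigma_\l}{4}C_L(\theta_\l-\gamma_\l)\ge0$ by Assumption~\ref{assume:gal}; hence $\mu_G(\max I^+)\le\mu_L(\max I^+)$ holds automatically. I expect the only delicate step to be the endpoint bookkeeping in the positivity claim—guaranteeing that the crossing furnished by the intermediate value theorem is genuinely interior to $(0,\theta_\l)$ rather than a zero-lift boundary point—whereas the existence itself reduces to a transparent sign change.
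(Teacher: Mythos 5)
Your proof is correct and follows essentially the same route as the paper's: the intermediate value theorem applied to $\mu_L-\mu_G$ on $[\gamma_\l,\max I^+]$, using the symmetry of the profile to get $\mu_L(\gamma_\l)=0<\mu_G(\gamma_\l)$ and hypothesis~\eqref{assum:mu} at the other endpoint, then $\mu_G>0$ for the positive-lift claim and $\mu_G(\theta_\l)=0$ for the final assertion. Your extra bookkeeping ruling out the degenerate endpoints $\varphi^\star=\gamma_\l$ and $\varphi^\star=\theta_\l$ is a slight refinement of the paper's terser argument, but it is the same proof.
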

\begin{proof}
Since $\gamma_\l>0$ and $\beta>0$, 
$\max I^+$ is well defined. 
 Because of Assumption~\ref{assume:gal}, $C_L$ and consequently $\mu_L$ are continuous.
  As we consider a symmetric profile, we have $\mu_L(\gamma_\l)=\frac{\sigma_\l}{4}C_L(0)=0$ whereas $\mu_G(\gamma_\l)>0$. Because of Inequality~\eqref{assum:mu}, the existence of solution of~\eqref{eq:mu_mu} in $[\gamma_{\monlambda},\max I^+]$ then follows from Intermediate Value Theorem. 
 Since $\mu_G\geq $ on $[\gamma_{\monlambda},\max I^+]$, the resulting lift is positive.

Suppose finally that $\max I^+=\theta_\l$. Assumption~\ref{assume:gal} guarantees that $\mu_L$ is positive on $[\gamma_\l,\theta_\l]$. Since $\mu_G(\theta_\l)=0$, the last assertion follows.
\end{proof}

In the case where $\mu_L$ is supplementary assumed to be increasing on $[\gamma_\l,\beta+\gamma_\l]$, then, the solution defined in Theorem~\ref{th:exist_standard} is unique.




\subsection{Corrected model}
We now consider the corrected model defined by~(\ref{Correc:Eq1}--\ref{Correc:Eq3}), for a given value $a_c\in(0,1)$.
The algebraic manipulations performed in the previous section to get Theorem~\ref{lem:reform_standard} cannot be pushed as far as with the simplified model and the resulting formula still contain the unknown $a$.  Hence, before stating a reformulation of this model and an existence result, we need to clarify the dependence of $a$ on the variable $\varphi$. Again, we express our assumptions in terms of $\mu_L^c$ and $\mu_D^c$, but the translation in terms of $C_L$, $C_D$, $\sigma_\l$ and $F_\l(\varphi)$ is straightforward. 

In all this section, we suppose that $0\in I$, i.e. $|\gamma_\l|\leq \beta $, which means in particular that  
 $I^+=(0,\min\{\theta_\l,\beta + \gamma_{\monlambda}\}]$.




\begin{lemma}\label{lem:implicit} 

Assume that Assumption~\ref{assume:gal} holds and define, for $\varphi\in I^+$ 
\begin{equation}\label{assum:muD}
g(\varphi):=\tan^{-1}\varphi\tan(\theta_\l-\varphi) + \frac{\mu_D^c(\varphi)}{\sin\varphi}\left(1+\tan^{-1}\varphi\tan(\theta_\l-\varphi)\right).
\end{equation}
Let $\psi$ be one of the functions given in Table~\ref{Table_correc}, with $F_\l(\varphi)=1$ in the case of Glauert's empirical correction. Then, the equation
\begin{equation}\label{eq:a=f}
\frac{a}{1-a}+ \frac{\sin\theta_\l\sin\varphi}{\cos(\theta_\l-\varphi)}\frac{\psi\left( (a-a_c)_+\right)}{(1-a)^2}=g(\varphi)
\end{equation}
defines a continuous  mapping $\tau: \varphi\in I^+\mapsto a\in[0,1)$. 

Moreover, if $g$ is decreasing and $\mu^c_D$ differentiable on  $I^+$, then $\tau$ is decreasing and differentiable for all $\varphi \in I^+$ with a possible exception of one point~$\varphi_c$, \ComM{which satisfies $\tau(\varphi_c)=a_c$}.
\end{lemma}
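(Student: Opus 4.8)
The plan is to read (\ref{eq:a=f}) as a scalar root-finding problem in the single variable $a$ at each frozen $\varphi$, and to transfer the qualitative properties of $\tau$ from the monotonicity of the resulting one-variable map. Throughout I abbreviate
\[
c(\varphi):=\frac{\sin\theta_\l\sin\varphi}{\cos(\theta_\l-\varphi)},\qquad h_\varphi(a):=\frac{a}{1-a}+c(\varphi)\,\frac{\psi\big((a-a_c)_+\big)}{(1-a)^2},
\]
so that (\ref{eq:a=f}) reads $h_\varphi(a)=g(\varphi)$. First I would record the elementary sign facts on $I^+$: since $\varphi\in(0,\theta_\l]$ and $\theta_\l\in(0,\tfrac\pi2)$, both $\sin\varphi$ and $\cos(\theta_\l-\varphi)$ are positive, so $c(\varphi)>0$; and reading $\psi$ off Table~\ref{Table_correc} (taking $F_\l=1$ in the Glauert empirical case, as prescribed), $\psi$ is continuous, non-negative and non-decreasing on $\R^+$ with $\psi(0)=0$. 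Consequently $a\mapsto \psi((a-a_c)_+)/(1-a)^2$ is continuous, non-negative, non-decreasing and vanishes on $[0,a_c]$, while $a\mapsto a/(1-a)$ is a continuous strictly increasing bijection of $[0,1)$ onto $[0,+\infty)$. Hence $h_\varphi$ is a continuous strictly increasing bijection $[0,1)\to[0,+\infty)$ with $h_\varphi(0)=0$.

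Existence and uniqueness then follow once I check $g(\varphi)\ge 0$ on $I^+$. Writing $P(\varphi):=\tan^{-1}\varphi\,\tan(\theta_\l-\varphi)$, one has $P\ge0$ on $I^+$ (there $\tan^{-1}\varphi>0$ and $\theta_\l-\varphi\in[0,\theta_\l)$ gives $\tan(\theta_\l-\varphi)\ge0$); together with $\mu_D^c\ge0$ from Assumption~\ref{assume:gal} and $\sin\varphi>0$, this yields $g=P+\frac{\mu_D^c}{\sin\varphi}(1+P)\ge0$. Since $g(\varphi)$ lies in the range $[0,+\infty)$ of the bijection $h_\varphi$, the equation $h_\varphi(a)=g(\varphi)$ has a unique root $a=:\tau(\varphi)\in[0,1)$, which defines the map. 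For continuity, I set $H(\varphi,a):=h_\varphi(a)-g(\varphi)$, jointly continuous because $\mu_D^c$ (Assumption~\ref{assume:gal}) and $\psi$ are; as $H(\varphi,\cdot)$ is strictly increasing with a unique zero and $g$ is locally bounded (so $\tau$ stays in a compact subinterval of $[0,1)$ locally), any limit point of $\tau(\varphi_n)$ along $\varphi_n\to\varphi$ is a zero of $H(\varphi,\cdot)$, hence equals $\tau(\varphi)$; this gives continuity of $\tau$.

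For the \emph{moreover} part I would exploit the same splitting at the threshold $a_c$. On the open set $\{\varphi:\tau(\varphi)<a_c\}$ the correction is inactive ($\psi\equiv0$), so $\tau$ solves $a/(1-a)=g(\varphi)$ explicitly, $\tau=\frac{g}{1+g}$; since $t\mapsto t/(1+t)$ is increasing and $g$ is decreasing, $\tau$ is there smooth and decreasing. On $\{\tau>a_c\}$, $\psi$ is smooth and the implicit function theorem applies: using
\[
\partial_a H=\frac{1}{(1-a)^2}+c(\varphi)\Big(\frac{\psi'}{(1-a)^2}+\frac{2\psi}{(1-a)^3}\Big)>0,\qquad
\tau'(\varphi)=\frac{g'(\varphi)-c'(\varphi)\,\dfrac{\psi((a-a_c)_+)}{(1-a)^2}}{\partial_a H},
\]
together with $c'(\varphi)=\dfrac{\sin\theta_\l\cos\theta_\l}{\cos^2(\theta_\l-\varphi)}>0$ (a one-line computation via $\cos\varphi\cos(\theta_\l-\varphi)-\sin\varphi\sin(\theta_\l-\varphi)=\cos\theta_\l$), $g'<0$, and $\psi\ge0$, the numerator is negative while the denominator is positive, so $\tau'<0$. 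Thus $\tau$ is differentiable and strictly decreasing off the level set $\{\tau=a_c\}$; since $\tau$ is strictly decreasing it is injective, so this level set is at most a single point $\varphi_c$, and $\tau$ is differentiable on $I^+\setminus\{\varphi_c\}$ and decreasing on all of $I^+$ by continuity.

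I expect the main obstacle to be precisely the corner of $\psi$ at $a=a_c$: for the order-$3$ Glauert and the Glauert empirical corrections $\psi'(0^+)\neq0$, so $\psi((a-a_c)_+)$ fails to be differentiable at $a=a_c$ and the implicit function theorem cannot be applied globally. The device that removes this difficulty is to establish strict monotonicity of $\tau$ first (via the explicit formula below $a_c$ and the IFT above $a_c$), which confines the bad set to the single point $\varphi_c=\tau^{-1}(a_c)$. A secondary bookkeeping point is to guarantee that the root stays in the codomain $[0,1)$ --- this rests on $g\ge0$ and on the blow-up of $h_\varphi$ as $a\to1^-$ --- and, for the Glauert empirical correction, on the fact that taking $F_\l=1$ restores $\psi(0)=0$, so that $h_\varphi(0)=0\le g(\varphi)$ and no spurious lower bound on $g$ is needed.
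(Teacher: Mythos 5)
Your proof is correct and follows essentially the same route as the paper's: for each frozen $\varphi$ the left-hand side of \eqref{eq:a=f} is a continuous, strictly increasing map of $a\in[0,1)$ onto $[0,+\infty)$, so the non-negativity of $g$ yields a unique root, and implicit differentiation off the level set $\{\tau=a_c\}$ gives the monotonicity; you additionally supply details the paper leaves implicit (the sign of $g$ on $I^+$, the continuity of $\tau$, and the strictness argument confining the exceptional set to a single $\varphi_c$). The only caveat --- shared with the paper's own proof, which likewise asserts the left-hand side ``goes from $0$ to $+\infty$'' --- is that your claim $\psi(0)=0$ fails for the Glauert empirical entry of Table~\ref{Table_correc} as literally written (there $\psi(0)=a_c(1-a_c)$ even with $F_\l=1$), so for that row the surjectivity of $h_\varphi$ onto $[0,+\infty)$ and your explicit formula $\tau=g/(1+g)$ below $a_c$ require reading the table's convention as making the correction inactive for $a\le a_c$.
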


Because of Assumption~\ref{assume:gal}, the function $\mu^c_D$ is non-negative and defined for all angles in concrete cases so that $g$ is well defined on $I^+$.
The only obstruction for $g$ to be decreasing would come \CCom{from this term. Indeed,} $C_D$ 
 \rCCCom{ typically increases as angle increases from zero,
 hence our assumption 
in Section~\ref{Slow_inc_Drag}}. But for usual profiles, its variations are negligible
when compared to the other (decreasing) terms in~\eqref{assum:muD}.

\begin{proof}
For simplicity of notation, let us rewrite~\eqref{eq:a=f} under the form
\begin{equation}\label{eq:a=f_simpli}
u(a)+v(\varphi)w(a)=g(\varphi),
\end{equation}
with $u(a)		:=\frac{a}{1-a}$, $v(\varphi):=\frac{\sin\theta_\l\sin\varphi}{\cos(\theta_\l-\varphi)}$, $w(a)	:=\frac{\psi\left( (a-a_c)_+\right)}{(1-a)^2}$.
Let us first consider the left-hand side of~\eqref{eq:a=f_simpli}. We see that $u$ is positive and increasing on $[0,1)$ as well as $w$ for any function $\psi$ given in Table~\ref{Table_correc} (with $F_\l(\varphi)=1$ in the case of Glauert's empirical correction). In the same way, $v$ is positive and increasing on $(0,\theta_\l]$, hence on $I^+$. 
Fix now $\varphi\in I^+$, it is fairly easy to see that the mapping $a\in[0,1)\mapsto u(a)+v(\varphi)w(a)$ is continuous, strictly increasing, 
strictly positive and goes from $0$ to $+\infty$. 
Since $g$ is bounded and assumed to be positive on $I^+$, 
there exists an only $a$ in $[0,1)$ such that~\eqref{eq:a=f_simpli} holds. Hence the existence of the mapping $\tau$.

Suppose now that $g$ is decreasing and $\mu^c_D$ differentiable.
If we set aside the function $w$ in the point $a=a_c$, all the functions involved in~\eqref{eq:a=f_simpli} are differentiable. Consider $\varphi\in I^+$, such that $\tau(\varphi)\neq a_c$. 
 Differentiating~\eqref{eq:a=f_simpli} with respect to $\varphi$ gives $\tau'(\varphi)=\frac{g'(\varphi)-v'(\varphi)w\left(\tau(\varphi)\right)}{u'\left(\tau(\varphi)\right)+v(\varphi)w'\left(\tau(\varphi)\right)}$. Combining the fact that $g$ is decreasing with the above properties of $v, w, u$ and their derivatives implies that $\tau'(\varphi)\leq 0$. As a consequence, the mapping $\tau$ is decreasing and differentiable either on 
 $I^+$, or on $I^+-\{\varphi_c\}$ where $\varphi_c$ is the unique value in $I^+$ such that $\tau(\varphi_c)=a_c$. The result follows.
\end{proof}

\begin{remark}\label{rem=tau} The quantity $a=\tau(\varphi)$ can generally be computed explicitly provided that the function $\psi$ is specified analytically as, e.g., in Table~\ref{Table_correc}. In these cases, the computation consists in solving a low order polynomial equation (in $a$).
\end{remark}
We can now state a result similar to Theorem~\ref{lem:reform_standard} in the case of the corrected model.

\begin{theorem}
\label{lem:reform_correc}  
Let Assumption~\ref{assume:gal} 
hold and $\psi$ be one of the functions given in Table~\ref{Table_correc}, with $F_\l(\varphi)=1$ in the case of Glauert's empirical correction.
Suppose also that $(\varphi,a,a')\in 
 \rComM{I^+-\{\frac{\pi}2\}\times [0,1)\times \R-\{-1\}}
$ satisfies~(\ref{Correc:Eq1}--\ref{Correc:Eq3}).
Then $\varphi$ satisfies 
\begin{equation}\label{eq:corr_mu}
\mu_L^c(\varphi)-\tan(\theta_\l-\varphi)\mu_D^c(\varphi)=\mu_G^c(\varphi),
\end{equation}
where 
\begin{equation}\label{defmuGc}
\mu_G^c(\varphi):=\mu_G(\varphi) + \dfrac{\cos\theta_\l{\sin^2\varphi}}{\cos(\theta_\l-\varphi)}\dfrac{\psi\left( (\tau(\varphi)-a_c)_+\right)}{(1-\tau(\varphi))^2}.
\end{equation}
Reciprocally, suppose that $\varphi \in I^+-\{\frac{\pi}2\}$ satisfies~\eqref{eq:corr_mu} and define $a$ and $a'$ by
\begin{align}
a =& \tau(\varphi),\label{eq:corr_a}\\ 
a'=&\dfrac{1-\tau(\varphi)}{\l \sin^2\varphi} (\mu^c_L\sin\varphi - \mu^c_D \cos\varphi).\label{eq:corr_ap}
\end{align}
Then
$(\varphi,a,a')\in 
 \rComM{I^+-\{\frac{\pi}2\}\times [0,1)\times \R-\{-1\}}
$ satisfies~(\ref{Correc:Eq1}--\ref{Correc:Eq3}).
\end{theorem}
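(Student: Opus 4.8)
The plan is to follow the scheme of Theorem~\ref{lem:reform_standard}, the essential new difficulty being that the $a$-correction couples $a$ and $\varphi$, so that $a$ cannot be eliminated explicitly and must instead be captured by the implicit map $\tau$ of Lemma~\ref{lem:implicit}. All computations rest on the identity $\lambda=\cot\theta_\l$ from~\eqref{def:theta_lambda} together with the three angle relations $\lambda\cos\varphi+\sin\varphi=\cos(\theta_\l-\varphi)/\sin\theta_\l$, $\lambda\sin\varphi-\cos\varphi=-\sin(\theta_\l-\varphi)/\sin\theta_\l$ and $1+\tan\theta_\l\tan\varphi=\cos(\theta_\l-\varphi)/(\cos\theta_\l\cos\varphi)$, which are licit because $\varphi\in I^+$ keeps $\varphi$, $\theta_\l$ and $\theta_\l-\varphi$ in ranges where the sines and cosines occurring in the denominators remain positive.

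For the direct implication, I assume $(\varphi,a,a')$ solves~\eqref{Correc:Eq1}--\eqref{Correc:Eq3} and first rewrite~\eqref{Correc:Eq1} as $\tan^{-1}\varphi=\lambda\frac{1+a'}{1-a}=\lambda\bigl(1+\frac{a}{1-a}\bigr)+\lambda\frac{a'}{1-a}$. Substituting $\frac{a'}{1-a}$ from~\eqref{Correc:Eq3} and then using~\eqref{Correc:Eq2} to remove $\mu_L^c$, and regrouping with the identities above, yields precisely~\eqref{eq:a=f}; the decisive point is that the chosen combination cancels every occurrence of $\mu_L^c$, so that the right-hand side $g(\varphi)$ depends on the blade only through the drag $\mu_D^c$. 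Because $g(\varphi)>0$ on $I^+$ and, by Lemma~\ref{lem:implicit}, the left-hand side of~\eqref{eq:a=f} is a continuous strictly increasing bijection from $[0,1)$ onto $[0,+\infty)$, its unique root in $[0,1)$ is $\tau(\varphi)$; once one has checked that the solution at hand satisfies $a\in[0,1)$, this forces $a=\tau(\varphi)$, which is~\eqref{eq:corr_a}, and~\eqref{eq:corr_ap} is then just~\eqref{Correc:Eq3} solved for $a'$. Finally, inserting~\eqref{Correc:Eq2} and~\eqref{Correc:Eq3} into~\eqref{Correc:Eq1} and applying the same angle identities produces
\begin{equation*}
\mu_L^c(\varphi)-\tan(\theta_\l-\varphi)\,\mu_D^c(\varphi)=\mu_G(\varphi)+\frac{\cos\theta_\l\sin^2\varphi}{\cos(\theta_\l-\varphi)}\frac{\psi\bigl((a-a_c)_+\bigr)}{(1-a)^2};
\end{equation*}
substituting $a=\tau(\varphi)$ turns the right-hand side into $\mu_G^c(\varphi)$ of~\eqref{defmuGc}, which is exactly~\eqref{eq:corr_mu}.

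For the converse I take $\varphi\in I^+-\{\frac\pi2\}$ satisfying~\eqref{eq:corr_mu} and set $a,a'$ by~\eqref{eq:corr_a}--\eqref{eq:corr_ap}. This direction is lighter: since $a=\tau(\varphi)\in[0,1)$ holds by construction, \eqref{eq:a=f} is automatic and no uniqueness argument is required. Each step of the forward computation is a multiplication or division by a factor that is nonzero on $I^+-\{\frac\pi2\}$, hence an equivalence, so reading the chain backward reconstitutes~\eqref{Correc:Eq1} and~\eqref{Correc:Eq2} from~\eqref{eq:corr_mu} and~\eqref{eq:a=f}, while~\eqref{Correc:Eq3} is~\eqref{eq:corr_ap}. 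I expect the genuine obstacle to lie entirely in the forward identification $a=\tau(\varphi)$: obtaining~\eqref{eq:a=f} is routine trigonometric bookkeeping, but concluding that $a$ is the specific root $\tau(\varphi)$ requires the range analysis confining $a$ to $[0,1)$ so that the uniqueness part of Lemma~\ref{lem:implicit} can be invoked. A secondary point deserving care is checking that the particular combination leading to~\eqref{eq:a=f} does eliminate $\mu_L^c$ — this is what makes $\tau$, and hence $\mu_G^c$, depend on the profile only through $\mu_D^c$, in keeping with the local/macroscopic separation emphasized after Theorem~\ref{lem:reform_standard}.
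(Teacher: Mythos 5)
Your proposal is correct and follows essentially the same route as the paper's proof: decompose Equation~\eqref{Correc:Eq1} as $\tan^{-1}\varphi=\lambda(1+\frac{a}{1-a})+\lambda\frac{a'}{1-a}$, substitute \eqref{Correc:Eq2}--\eqref{Correc:Eq3}, obtain \eqref{eq:a=f} and the $a$-dependent form of \eqref{eq:corr_mu}, and invoke Lemma~\ref{lem:implicit} to identify $a=\tau(\varphi)$ (the paper derives the two intermediate identities in the opposite order, which is immaterial). The caveat you flag — that identifying $a$ with $\tau(\varphi)$ requires $a\in[0,1)$, which is stronger than the $a\in\R-\{1\}$ of the statement — is real, but the paper's own proof silently makes the same restriction, so you are on equal footing there.
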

We refer to Section~\ref{numintro} for concrete examples of graphs of $\mu^c_G$ 
 and $\varphi\mapsto \mu_L^c(\varphi)-\tan(\theta_\l-\varphi)\mu_D^c(\varphi)$. 
As was the case with the simplified model, the value $\varphi=\frac\pi2$ is excluded only for the technical reason that~\eqref{Correc:Eq1} is not defined for this angle. 

\begin{proof}
Thanks to Lemma~\ref{lem:implicit}, $\tau$ is well defined on $I^+$.
Let $(\varphi,a,a')\in I^+-\{\frac{\pi}2\}\times [0,1)\times \R^+$ satisfying~(\ref{Correc:Eq1}--\ref{Correc:Eq3}). Because of~\eqref{Correc:Eq1}, 
$\tan^{-1}\varphi =\lambda_{\CCCom{r}}\frac{1+a'}{1-a} =\lambda_{\CCCom{r}}(1+\frac{a}{1-a}) + \lambda_{\CCCom{r}}\frac{a'}{1-a}$.
In this equation, the terms $\frac{a}{1-a}$ and $\lambda_{\CCCom{r}}\frac{a'}{1-a}$ can be eliminated thanks to~\eqref{Correc:Eq2} and~\eqref{Correc:Eq3}, respectively. After some algebraic manipulations, we end up with 
 $\mu_L^c=(\sin\varphi+ \mu_D^c)\tan(\theta_\l-\varphi) + \dfrac{\cos\theta_\l{\sin^2\varphi}}{\cos(\theta_\l-\varphi)}\dfrac{\psi\left( (a-a_c)_+\right)}{(1-a)^2}$,
 hence~\eqref{eq:corr_mu}. Using this formula to eliminate $\mu_L^c$ in~\eqref{Correc:Eq2} gives~\eqref{eq:a=f}. Consequently, Lemma~\ref{lem:implicit} implies that $a$ and $\varphi$  satisfy~\eqref{eq:corr_a}. Finally,~\eqref{eq:corr_ap} follows from~\eqref{eq:corr_a} and~\eqref{Correc:Eq3}.

Suppose now that \rComM{$\varphi\in I^+-\{\frac{\pi}2\}$ satisfies~\eqref{eq:corr_mu}, and let $(a,a')$ defined by (\ref{eq:corr_a}--\ref{eq:corr_ap}), meaning that $a\in[0,1)$}. Replacing $\tau(\varphi)$ by $a$ in~\eqref{eq:corr_ap} gives immediately~\eqref{Correc:Eq3}. Combining~\eqref{eq:corr_a} with the definition of $\mu_G^c$ give\rComM{s}
~\eqref{Correc:Eq2}. Finally,~\eqref{Correc:Eq1} is obtained by introducing $a$ and $a'$ into~\eqref{eq:corr_mu}, \rComM{where $\varphi\neq \frac{\pi}2$ implies that $a'\neq -1$}.
\end{proof}

As in the simplified model, Glauert's model boils down to a scalar equation in $\varphi$. 
 However, 
formulation~\eqref{eq:corr_mu} does not completely decompose the terms into a local part 
and macroscopic modeling part:  
 much as the left-hand side of~\eqref{eq:corr_mu} still only relies on the turbine 
the right-hand side now also depends on it via $\tau$, since~\eqref{eq:a=f} includes $\mu_D^c$.
Before going further, let us give more details about the behavior of $\tau$ in $\varphi=0$.

\begin{lemma}\label{asympt}
Let Assumption~\ref{assume:gal} 
hold, $\mu_D^c(0)\neq 0$ and $\psi$ be one of the functions given in Table~\ref{Table_correc}, with $F_\l(\varphi)=1$ in the case of Glauert's empirical correction. 
 Then
 $\tau(\varphi)=1-\sqrt{\frac{\psi(1-a_c)}{\mu_D^c(0)}}\varphi^{3/2}+o_{\varphi=0}(\varphi^{3/2})$.
\end{lemma}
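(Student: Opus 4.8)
The plan is to exploit the implicit relation~\eqref{eq:a=f} that defines $\tau$ and to run a dominant-balance analysis as $\varphi\to0^+$. First I would record the $\varphi\to0$ behaviour of the ingredients of~\eqref{eq:a=f}. Writing $v(\varphi):=\frac{\sin\theta_\l\sin\varphi}{\cos(\theta_\l-\varphi)}$, one has $v(\varphi)\sim\frac\varphi\l$, since $\tan\theta_\l=\frac1\l$. Turning to $g$ from~\eqref{assum:muD}, the term $\tan^{-1}\varphi\,\tan(\theta_\l-\varphi)$ behaves like $\frac1{\l\varphi}$, whereas $\frac{\mu_D^c(\varphi)}{\sin\varphi}\bigl(1+\tan^{-1}\varphi\,\tan(\theta_\l-\varphi)\bigr)$ behaves like $\frac{\mu_D^c(0)}{\l\varphi^2}$; hence the drag term dominates and $\varphi^2 g(\varphi)\to\frac{\mu_D^c(0)}\l>0$. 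Since $g(\varphi)\to+\infty$ while $v(\varphi)\to0$, the defining relation forces $\tau(\varphi)\to1$, so I set $\tau(\varphi)=1-\varepsilon(\varphi)$ with $\varepsilon\to0^+$. In particular $\tau(\varphi)>a_c$ near $0$, so that $(\tau(\varphi)-a_c)_+=1-a_c-\varepsilon$ stays bounded away from the kink of $\psi$ at the origin and $\psi(1-a_c-\varepsilon)\to\psi(1-a_c)>0$ by continuity.

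Next I would substitute $a=1-\varepsilon$ into~\eqref{eq:a=f} and clear the $(1-a)^2=\varepsilon^2$ denominator, obtaining the polynomial form
\begin{equation*}
\varepsilon(1-\varepsilon)+v(\varphi)\,\psi(1-a_c-\varepsilon)=\varepsilon^2 g(\varphi).
\end{equation*}
The three terms scale, respectively, like $\varepsilon$, like $\varphi$, and like $\varepsilon^2/\varphi^2$, which predicts the balance in which $\varepsilon$ is of order $\varphi^{3/2}$. I would therefore rescale by setting $\varepsilon=s\,\varphi^{3/2}$ and divide the identity by $\varphi$, which yields
\begin{equation*}
s^2\,\bigl(\varphi^2 g(\varphi)\bigr)-s\,\varphi^{1/2}(1-\varepsilon)=\frac{v(\varphi)}\varphi\,\psi(1-a_c-\varepsilon),
\end{equation*}
where $\varphi^2 g(\varphi)\to\frac{\mu_D^c(0)}\l$ and $\frac{v(\varphi)}\varphi\to\frac1\l$ are positive in the limit.

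Finally, I would solve for $s$. Rearranged, the rescaled identity reads as a quadratic $\bigl(\varphi^2 g(\varphi)\bigr)s^2-\varphi^{1/2}(1-\varepsilon)\,s-\frac{v(\varphi)}\varphi\psi(1-a_c-\varepsilon)=0$, of which $s=\varepsilon/\varphi^{3/2}\ge0$ is the unique nonnegative root. Using the already-established fact that $\varepsilon\to0$, the three coefficients converge, as $\varphi\to0^+$, to $\frac{\mu_D^c(0)}\l>0$, to $0$, and to $-\frac{\psi(1-a_c)}\l<0$ respectively, none of these limits depending on $s$. Passing to the limit in the quadratic formula for the positive root then gives $s(\varphi)\to\sqrt{\psi(1-a_c)/\mu_D^c(0)}$, i.e. $\varepsilon(\varphi)=\sqrt{\psi(1-a_c)/\mu_D^c(0)}\,\varphi^{3/2}+o(\varphi^{3/2})$, which is the announced expansion for $\tau=1-\varepsilon$.

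The delicate point, and the main obstacle, is to make this dominant balance rigorous rather than merely formal: one must first guarantee that $\varepsilon\to0$ (which follows since $g(\varphi)\to+\infty$ while $v(\varphi)\to0$) and then control the rescaled variable $s$ so that the limit may legitimately be passed. This is made tractable by the fact that $\psi$ is evaluated at $1-a_c-\varepsilon$, which stays bounded away from the single non-smooth point of $\psi$ at the origin, so that only the continuity of $\psi$ guaranteed for each entry of Table~\ref{Table_correc} is needed; the convergence of the coefficients being independent of $s$ (once $\varepsilon\to0$ is known) then removes any circularity from the limiting argument.
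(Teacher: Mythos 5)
Your argument is correct and follows essentially the same route as the paper: first force $\tau(\varphi)\to 1^-$ from the blow-up of $g$, then perform the dominant balance $1-\tau\sim s\,\varphi^{3/2}$ in the cleared-denominator form of~\eqref{eq:a=f} and identify $s^2\to\psi(1-a_c)/\mu_D^c(0)$. The only (harmless) difference is bookkeeping: where the paper extracts a convergent subsequence of $\nu^2/\varphi^3$ and rules out blow-up by contradiction, you observe that $s=\varepsilon/\varphi^{3/2}$ is the unique nonnegative root of an exact quadratic with convergent coefficients and pass to the limit in the root formula, which is a slightly cleaner justification of the same balance.
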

\begin{proof} 
Thanks to Lemma~\ref{lem:implicit}, $\tau$ is well defined on $I^+$.
Let us first prove that $\Lim{\varphi\rightarrow 0^+}\tau(\varphi)=1^-$. From~\eqref{assum:muD}, we see that $\Lim{\varphi\rightarrow 0^+}g(\varphi)=+\infty$.  Given $\varphi \in \rComM{I^+}$, we have $a=\tau(\varphi)\in [0,1)$ and $ 1-\frac{\cos\theta_\l\cos\varphi}{\cos(\theta_\l-\varphi)} =\frac{\sin\theta_\l\sin\varphi}{\cos(\theta_\l-\varphi)} \geq 0$, so that all the terms of the left-hand side  of~\eqref{eq:a=f} are positive. As a consequence, the only possibility for the sum of these terms to go to $+\infty$ is that $\Lim{\varphi\rightarrow 0^+}\tau(\varphi)=1^-$.

Define now $\nu(\varphi)=1-\tau(\varphi)$. Expanding~\eqref{eq:a=f} in a neighborhood $\varphi=0^+$, we get
\[
\frac{1}{\nu(\varphi)} - 1 +\left(\tan \theta_\l . \varphi 
+ o_{\varphi=0}(\varphi) \right)\frac{\psi\left( (1-a_c)_+\right) + o_{\varphi=0}(1)}{\nu(\varphi)^2}
=\tan\theta_\l \frac{\mu_D^c(0)}{\varphi^2} + o_{\varphi=0}(\frac 1{\varphi^2}),
\]
which implies:
\begin{align}
\frac{\nu^2(\varphi)}{\varphi^3}\left( \frac{\varphi^2}{\nu(\varphi)}- \varphi^2 - (\tan\theta_\l .	\mu_D^c(0) + o_{\varphi=0}(1))\right)&=-\tan\theta_\l\psi\left( (1-a_c)_+\right).\label{absurd}
\end{align}

Let $(\varphi_n)_{n\in\N}$ a sequence satisfying $\Lim{n} \varphi_n=0^+$, so that $\Lim{n} \nu( \varphi_n)=0^+$. Suppose that $\Lim{n} \frac{\nu^2( \varphi_n)}{\varphi^3_n}=+\infty$. Since $\frac{\varphi_n^2}{\nu(\varphi_n)}= \left(\frac{\varphi_n^3}{\nu^2(\varphi_n)}\right)^{2/3} \nu^{1/3}(\varphi_n) $,  
{this sequence goes to zero.}
Back to~\eqref{absurd}, we find a contradiction since the left-hand side goes to $+\infty$ whereas the right-hand side is constant. It follows that, up to a subsequence, we can assume that $\Lim{n} \frac{\nu^2( \varphi_n)}{\varphi^3_n}=\ell$ for a certain $\ell$. Setting $\varphi=\varphi_n$ in~\eqref{absurd} and passing to the limit $n\rightarrow +\infty$, we obtain that $\ell=\frac{\psi\left( (1-a_c)_+\right)}{\mu_D^c(0)}$. The result follows.
\end{proof}

\begin{remark}
If $\mu_D^c(0)=0$, we obtain 
 $\tau(\varphi)=1-\sqrt{\psi(1-a_c) \varphi}+o_{\varphi=0}(\varphi^{1/2})$.
\end{remark}

The quantity $\mu_D^c(0)=\frac{\sigma_\l}{4} C_D(-\gamma_{\monlambda})$ has no specific physical meaning in the applications.  We have introduced it as a constant (that can be expressed explicitly), for simplicity of presentation. \rCCCom{As a matter of fact,  $\varphi=0$ 
 is a specific angle 
from the macroscopic point of view}, as appears when considering $\mu_G$ (that cancels in $0$) and $\mu_G^c$, see the proof of the next result. 

We are now in a position to give an existence result about the corrected model.
\begin{corollary}[of Theorem~\ref{lem:reform_correc}] \label{cor:exist_corrected}
Suppose that Assumption~\ref{assume:gal} 
 holds and that
\begin{equation}\label{assum_correc_angle1}
\mu^c_G(\max I^+)\leq \mu^c_L(\max I^+)-\tan(\theta_\l-\max I^+)\mu^c_D(\max I^+).
\end{equation}
Then~\eqref{eq:corr_mu} admits 
a solution in 
$I^+$ 
corresponding to a positive lift.
Moreover, if $g$ is decreasing, $\max I^+=\theta_\l$ and $\rComM{\varphi_c} <\theta_\l$, where $\rComM{\varphi_c}$ is defined in Lemma~\ref{lem:implicit}, then~\eqref{assum_correc_angle1} is necessarily satisfied. 
\end{corollary}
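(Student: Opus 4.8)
The plan is to reproduce, for the corrected model, the Intermediate Value Theorem argument of Corollary~\ref{th:exist_standard}. I introduce the continuous function
\[
h(\varphi):=\mu_L^c(\varphi)-\tan(\theta_\l-\varphi)\,\mu_D^c(\varphi)-\mu_G^c(\varphi)
\]
on the half-open interval $I^+=(0,\max I^+]$ and look for a zero of $h$: by Theorem~\ref{lem:reform_correc}, any such zero yields a full solution $(\varphi,a,a')$ of~(\ref{Correc:Eq1}--\ref{Correc:Eq3}) after post-computing $a=\tau(\varphi)$ and $a'$. Continuity of $h$ comes from Assumption~\ref{assume:gal} (for $C_L,C_D$), from the continuity of the trigonometric factors on $I^+$, and from Lemma~\ref{lem:implicit} (for $\tau$). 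The right endpoint is settled immediately, since hypothesis~\eqref{assum_correc_angle1} is precisely $h(\max I^+)\ge 0$.

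The delicate point, and the one place where $I^+$ is open, is the behavior as $\varphi\to 0^+$. I first observe that the local part $\mu_L^c(\varphi)-\tan(\theta_\l-\varphi)\mu_D^c(\varphi)$ stays bounded, because $\mu_L^c$ and $\mu_D^c$ are continuous at $0$ and $\tan(\theta_\l-\varphi)\to\tan\theta_\l$. For the macroscopic part I insert the expansion of Lemma~\ref{asympt}, $1-\tau(\varphi)\sim\sqrt{\psi(1-a_c)/\mu_D^c(0)}\,\varphi^{3/2}$, into the definition~\eqref{defmuGc}. Using $\sin^2\varphi\sim\varphi^2$, $\cos(\theta_\l-\varphi)\to\cos\theta_\l$ and $\psi((\tau(\varphi)-a_c)_+)\to\psi(1-a_c)$, the correction term behaves like $\mu_D^c(0)/\varphi$, so $\mu_G^c(\varphi)\to+\infty$ and hence $h(\varphi)\to-\infty$. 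Thus $h<0$ near $0$, and together with $h(\max I^+)\ge 0$ the Intermediate Value Theorem produces a zero $\varphi^\star\in I^+$ of~\eqref{eq:corr_mu}.

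To see that this solution corresponds to a positive lift, I evaluate~\eqref{eq:corr_mu} at $\varphi^\star$, giving $\mu_L^c(\varphi^\star)=\tan(\theta_\l-\varphi^\star)\mu_D^c(\varphi^\star)+\mu_G^c(\varphi^\star)$. For $\varphi^\star\in(0,\theta_\l)$ one has $\mu_G(\varphi^\star)=\sin\varphi^\star\tan(\theta_\l-\varphi^\star)>0$, while the extra term in~\eqref{defmuGc} and $\tan(\theta_\l-\varphi^\star)\mu_D^c(\varphi^\star)$ are non-negative; hence $\mu_L^c(\varphi^\star)>0$, i.e. $C_L(\varphi^\star-\gamma_\l)>0$, exactly as in Corollary~\ref{th:exist_standard}.

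It remains to prove the necessity claim. Assuming $g$ decreasing, Lemma~\ref{lem:implicit} makes $\tau$ decreasing; since $\tau(\varphi^c)=a_c$ and $\varphi^c<\theta_\l=\max I^+$, monotonicity gives $\tau(\theta_\l)<a_c$, so $(\tau(\theta_\l)-a_c)_+=0$ and the correction term in~\eqref{defmuGc} vanishes. As $\mu_G(\theta_\l)=\sin\theta_\l\tan(0)=0$, we get $\mu_G^c(\theta_\l)=0$. On the other hand $\tan(\theta_\l-\theta_\l)=0$ collapses the right-hand side of~\eqref{assum_correc_angle1} to $\mu_L^c(\theta_\l)$, which is non-negative by the positivity of $C_L$ on $[0,\beta]$ in Assumption~\ref{assume:gal}, exactly as in Corollary~\ref{th:exist_standard}. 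Therefore $0=\mu_G^c(\theta_\l)\le\mu_L^c(\theta_\l)$, which is~\eqref{assum_correc_angle1}. I expect the blow-up analysis at $\varphi=0^+$ to be the main obstacle: it relies on the exact $\varphi^{3/2}$ rate of Lemma~\ref{asympt} and on $\mu_D^c(0)>0$; should $\mu_D^c(0)=0$, the Remark after Lemma~\ref{asympt} shows $\mu_G^c$ no longer diverges and the left-end sign of $h$ would instead have to be read off from $\mu_L^c(0)$.
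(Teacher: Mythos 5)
Your proof is correct and follows essentially the same route as the paper: the blow-up $\mu_G^c(\varphi)\to+\infty$ as $\varphi\to 0^+$ via Lemma~\ref{asympt}, the Intermediate Value Theorem against hypothesis~\eqref{assum_correc_angle1} at $\max I^+$, positivity of the lift from the sign of $\mu_G^c$, and the deactivation of the $\psi$-correction beyond $\varphi^c$ for the necessity claim. Your closing caveat about $\mu_D^c(0)=0$ matches the remark the paper itself makes after the corollary.
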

\begin{proof}
Because of Lemma~\ref{asympt} and Definition~\eqref{defmuGc} of $\mu^c_G$, we get 
$\mu^c_G(\varphi) \approx_{\varphi\rightarrow 0^+}  \frac{\mu_D^c(0)}{\varphi},$
so that 
 $\Lim{\varphi\rightarrow 0^+} \mu^c_G(\varphi) = +\infty$.
This implies that there exists a small enough $\varphi_0>0$ such that $\mu^c_G(\varphi_0) \geq \mu_L^c(\varphi_0)-\tan(\theta_\l-\varphi_0)\mu_D^c(\varphi_0)$.
Because of the assumption~\eqref{assum_correc_angle1}
, the existence of a solution of~\eqref{eq:corr_mu} then follows from Intermediate Value Theorem.  The positivity of $\mu^c_G$ on $I^+$ implies that the resulting lift is positive.

Suppose now that $g$ is decreasing, $\max I^+=\theta_\l$ and $\rComM{\varphi_c} < \theta_\l$. \rComM{Lemma~\ref{lem:implicit}} implies that $\tau$ is decreasing   
so that the correction associated with $\psi$ is not anymore active on $[\rComM{\varphi_c},\theta_\l)$. 
We then have
$\mu^c_G(\max I^+)=\mu_G(\max I^+)=\mu_G(\theta_{\monlambda})=0 $ whereas $  \mu^c_L(\max I^+) -\tan(\theta_\l-\max I^+)\mu_D^c(\max I^+) =\mu^c_L(\max I^+) \geq 0$.
Hence~\eqref{assum_correc_angle1}. \end{proof}


Unlike the simplified model, no condition on $\gamma_\l$ or $\mu_L^c(\gamma_\l)$ \rCCCom{is assumed in Corollary~\ref{cor:exist_corrected}}  
, but the alternative 
assumption $0\in I$ is required. This makes the corrected model much better posed than its simplified version.

\begin{remark}
In the case $\mu^c_D(0)=0$, similar reasoning gives $\mu^c_G(\varphi) \approx_{\varphi\rightarrow 0^+}(1+\tan\theta_\l)\varphi$. 
As a consequence, $\mu^c_G(0)=0$, so that, as in the simplified model, one needs an assumption about, e.g., $\mu^c_L(\gamma_\l)$ to get an existence result similar to Corollary~\ref{th:exist_standard}.  
\end{remark}

\subsection{Multiple solutions}\label{multiple}
The results of the previous sections can be completed by some additional remarks about cases of multiple solution. 
More precisely, these cases can be sorted into three independent categories:
\begin{enumerate}
\item {Multiple solutions in the simplified model:} since 
$\Lim{\varphi\rightarrow \theta_\l \pm \pi/2}\mu_G(\varphi)=-\infty$, 
there shall be two intersections between the graphs of $\mu_G$ and $\mu_L$, e.g. in the case where $\mu_L$ is affine on a large enough interval, $C_L(0)=0$ and $\gamma_\l\in (0,\theta_\l]$. In this case, one of the two roots gives rise to a negative lift.

\item {Multiple solutions caused by stall:} as mentioned in Section~\ref{locvar}, the stall phenomenon is generally associated with a sudden decrease in $C_L$. It follows that if the stall angle $\alpha_s$ satisfies $\alpha_s+\gamma_\l \in I$, the graph of $\mu_L$ shall cross the graph of $\mu_G$ at an angle in $\varphi\geq \alpha_s +\gamma_\l$. This fact is reported in~\cite[p.129]{Manwell}: 

\textit{``In the stall region [...] there may be multiple solution for $C_L$. Each of these
solutions is possible. The correct solution should be that which maintains the continuity of the
angle of attack along the blade span.''}

\item {Multiple solutions in the corrected model:} since
$\Lim{\varphi\rightarrow 0^+} \mu^c_G(\varphi) = +\infty$,  
the graph of $\mu_G^c$ may no longer be concave on $I^+$ 
when a correction for large values of $a$ is active. 
Hence possible multiple solution, e.g. in the case $\mu_L$ is affine. 

\end{enumerate}
Concrete examples of these three types of multiple solution are given in Section~\ref{numres}.

\section{Solution algorithms}\label{Sec:algo}
To solve numerically Glauert's model, a specific fixed-point \CCCom{approach} is often highlighted in the literature. 
In this section, we recall its main features and introduce more efficient procedures.



\subsection{\CCCom{Standard fixed-point procedure} }\label{sec:algo_stand}
Solving the simplified or the corrected model is usually done by a dedicated fixed-point procedure \ComM{that comes in two versions}, see~\cite{Branlard,Hansen,Manwell,Lubitz,Sorensen}\footnote{Version 1 is mentionned in~\cite{Branlard} and~\cite{Manwell}.}
 or the early presentation  in~\cite[p.47]{Wilson}. 
\begin{algorithm}
\caption{Solving BEM system, \CCCom{Standard fixed-point (versions 1 and 2)}}
\begin{algorithmic}
\STATE{\textbf{Input:} $\textrm{Tol}>0$, $\alpha\mapsto C_L(\alpha)$, $\alpha\mapsto C_D(\alpha)$, $\lambda_{\CCCom{r}}$, $\gamma_{\l}$, $\sigma_{\l}$, $F_{\l}$, $x\mapsto \psi(x)$.}\\
\STATE{\textbf{Initial guess:}  $a,a'$.}\\
\STATE{\textbf{Output:}  $a, a', \varphi$.}\\

\STATE Set $err \vcentcolon= \textrm{Tol}+1.$\;
\WHILE{$err>\textrm{Tol}$}{
\item (1) Set 
$\varphi \vcentcolon= {\rm atan}\left(\frac{1-a}{\lambda_{\CCCom{r}}(1+a')}\right).$
\;
\item (2) Set $a$ as the solution of~\eqref{Correc:Eq2}.\;
\item (3) \ComM{Set $a'$ as the solution of~\eqref{Correc:Eq3} (version 1) or of~\eqref{Correc:Eq3_alter} (version 2)}
\item (4) Set $err \vcentcolon= \left| \tan\varphi - \frac{1-a}{\lambda_{\CCCom{r}}(1+a')}\right|$.
}
\ENDWHILE
\end{algorithmic}
\label{algo_corrected}
\end{algorithm}
This procedure is given in Algorithm~\ref{algo_corrected}, where
 the stopping criterion is arbitrary and usually not mentioned in monographs. 
The convergence of these algorithms is problematic. Instabilities are often observed in practice, as reported, e.g., in~\cite{Lubitz}:

\textit{``Note that this set of equations must be solved simultaneously, and in practice, numerical
instability can occur.''}

\textit{``When local angle of attack is around the stall point, or becomes negative, getting the BEM
code to converge can become difficult.''}\\
We also refer to~\cite{Maniaci} for a specific study of some convergence issues. The analysis of the algorithm is tedious ; we refer to Appendix for an example of setting where the convergence of \ComM{Version 1} is guaranteed. 

\subsection{Optimized fixed-point procedures}
\label{algo_new}
\CCCom{To cure the convergence issues of observed when using
 the standard fixed-point procedure, various alternative fixed-point procedures have been proposed in the last decade. 
A Newton-Raphson procedure have been studied numerically in~\cite{Williams}. As usual with Newton's iteration, this method outperforms the standard fixed-point in case of convergence. However, this approach fails to converge in some regimes, which can be described numerically in terms of $\lambda_{\CCCom{r}}$ and $\sigma$. Sun \textit{et al.} proposed to modified the standard fixed-point by introducing a relaxation term, i.e., replace the step (1) in Algorithm~\ref{algo_corrected} 
 by 
$\varphi \vcentcolon= (1-w)\varphi+ w\ {\rm atan}\left(\frac{1-a}{\lambda_{\CCCom{r}}(1+a')}\right)$, 
for $w\in(0,1]$. In case of non-convergence, $w$ is divided by 2. This procedure is tested numerically in~\cite{Sun}.
Thanks to our new formulation, we propose now alternative fixed-point procedures whose convergence can be guaranteed in some cases.
}
\subsubsection{General formulation}
\label{sec:new fixed point}
In view of~\eqref{eq:corr_mu}, we consider now optimized fixed-point procedures
based on the iteration
\begin{align}
\varphi^{k+1}=&f(\varphi^k),\label{eq:rec_new_c}
\end{align}
with  
 $f(\varphi)=\varphi + \rho(\varphi)Res(\varphi)$ 
 , where $Res(\varphi):=\mu^c_G(\varphi) - \mu^c_L(\varphi) + \tan(\theta_\l-\varphi)\mu^c_D(\varphi)$ and $\rho(\varphi)>0$ is a given \Com{relaxation coefficient}. The procedure is summarized in Algorithm~\ref{algo_new_algo}.
\begin{algorithm}
\caption{Solving BEM system, \CCCom{Optimized fixed-point procedure}}
\label{algo_new_algo}
\begin{algorithmic}

\STATE{\textbf{Input:} $\textrm{Tol}>0$, $\alpha\mapsto C_L(\alpha)$, $\alpha\mapsto C_D(\alpha)$, $\lambda_{\CCCom{r}}$, $\gamma_{\l}$, $\sigma_{\l}$, $F_{\l}$, $x\mapsto \psi(x)$.}\\
\STATE{\textbf{Initial guess:}  $\varphi$.}\\
\STATE{\textbf{Output:}  $\varphi$.}\\

\STATE Set $err \vcentcolon= \textrm{Tol}+1.$\;
\WHILE{$err>\textrm{Tol}$}{
\item Compute $a:=\tau(\varphi)$, i.e. the solution of~\eqref{eq:a=f}.\;
\item Compute $\mu^c_G(\varphi)$, using~\eqref{defmuGc} and $f(\varphi)$.\;
\item Set $err \vcentcolon= \left| f(\varphi) - \varphi\right|$.\;
\item Set $\varphi=f(\varphi)$.
}
\ENDWHILE
\end{algorithmic}
\end{algorithm}
\ComM{The parameter of $\rho(\varphi)$ can be optimized to obtain a robust version of this procedure or a Newton procedure.
\subsubsection{Robust version}
Defining 
\begin{equation}\label{def:rho_c}
\rho(\varphi)=\rho_\varepsilon(\varphi):=\dfrac{\varepsilon}{ \max\left\{0,-\mu^{\ComM{c}}_G{}'(\varphi)\right\} 
+ 
\max\limits_{I^+}\mu^{c}_L{}'   
+\left(1+\tan^2\theta_\l\right)\mu^{c}_D(\varphi) }
\end{equation}
leads to a robust algorithm, whose convergence is guaranteed when $\psi=0$ (or when $a$ remains below $a_c$).}
%
%
%
%
%
%
\begin{theorem} \label{th:cv}
Suppose that $\max I^+=\theta_\l$, $\psi=0$, and that Assumption~\ref{assume:gal} holds. Assume also that $\mu^{c}_L{}$ and $\mu^{c}_D{}$ are continuously differentiable on $ I^+$, with $\mu^{c}_L{}$ and $\mu^{c}_D{}$ non-decreasing and that~(\ref{Standard:Eq1}--\ref{Standard:Eq3}) admit a solution in $ I^+ $. 
Given $\varepsilon\in (0,1)$ 
the sequence $(\varphi^{k})_{k\in\N}$ defined by~\eqref{eq:rec_new_c} with $\rho(\varphi)=\rho_\varepsilon(\varphi)$ defined in~\eqref{def:rho_c}
 and the initial value
 $\varphi^{0}=\theta_\l \label{eq:rec_new_c_0}$ 
 converges to $\varphi^\star$, the largest solution of~\eqref{eq:corr_mu} in $I^+$.
\end{theorem} 

\begin{proof}
The assumption on $\mu^{c}_L{}$ 
guarantees that the denominator in~\eqref{def:rho_c} \ComM{is strictly positive on $I^+$} 
so that $\rho_\varepsilon(\varphi)$ is well-defined and positive \ComM{on this interval}.  
\ComM{Since $\mu^{c}_G=\mu_G$ is concave}, $\varphi\mapsto\rho_\varepsilon(\varphi)$ is decreasing on $I^+$.
Since $Res(\varphi)\leq 0$ 
 on $[\varphi^\star,\theta_\l]$, we get:
\begin{align*}
f{}'(\varphi)	=&1 + \rho_\varepsilon(\varphi)\left(\mu_G'(\varphi) - \mu^{c}_L{}'(\varphi) -(1+ \tan^2(\theta_\l-\varphi))\mu^{c }_D(\varphi) + \tan(\theta_\l-\varphi)\mu^{c}_D{}'(\varphi)\right) \\ &+ \rho_\varepsilon{}'(\varphi)\ComM{Res(\varphi)}\\
		\geq& 1 - \rho_\varepsilon(\varphi) \left(
\max\left\{0,-\mu_G'(\varphi)\right\} 
+ \max\limits_{I^+}\mu^{c}_L{}'   +\left(1+\tan^2\theta_\l\right) \mu^{c}_D(\varphi) \right)
		= 1-\varepsilon \geq 0,
\end{align*}
so that $f$ is increasing on $[\varphi^\star,\theta_\l]$. Since $\mu^{c}_L{}$ is non-decreasing, we have
\begin{align*}
f(\theta_\l)	&=\theta_\l + \rho_\varepsilon(\theta_\l)\left(\mu_G(\theta_\l) - \mu^c_L(\theta_\l) \right)
=\theta_\l - \rho_\varepsilon(\theta_\l)\mu^c_L(\theta_\l) \leq \theta_\l.
\end{align*}
These results and $f(\varphi^\star)=\varphi^\star$, imply that $f([\varphi^\star,\theta_\l])\subset [\varphi^\star,\theta_\l]$. Since $\varphi^{0}=\theta_\l$, $(\varphi^{k})_{k\in\N}$ is bounded and decreasing, hence converges.  The result follows.
\end{proof}

In some cases, we can estimate the rate of convergence of $(\varphi^{k})_{k\in\N}$.

\begin{theorem}
In addition to the assumptions of Theorem~\ref{th:cv}, suppose that
\begin{equation}\label{eq:CVbound}
 \tan\theta_\l ( 1+\max\limits_{I^+}\mu^{c}_D{}') < \min\limits_{I^+}\mu^{c}_L{}'.
\end{equation}
Then 
the sequence $(\varphi^{k})_{k\in\N}$ defined by~\eqref{eq:rec_new_c} satisfies
\[|\varphi^{k}-\varphi^\star|\leq \left(1 - \frac{\min\limits_{I^+}\mu^{c}_L{}' - \tan\theta_\l ( 1+\max\limits_{I^+}\mu^{c}_D{}') }{  \max\limits_{I^+}\mu^{c}_L{}' +\sin\theta_\l + (1+\tan^2\theta_\l )\mu^{c}_D(\theta_\l) }\right)^k |\theta_\l-\rho_\varepsilon(\theta_\l)\mu^c_L(\theta_\l)|.\]
\end{theorem}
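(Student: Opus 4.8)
The plan is to establish the geometric convergence rate by showing the iteration map $f$ is a contraction on the stable interval $[\varphi^\star,\theta_\l]$, and then to iterate the contraction estimate back to the initial value $\theta_\l$. From the proof of Theorem~\ref{th:cv} we already know that $[\varphi^\star,\theta_\l]$ is stable under $f$, that the sequence is well-defined and decreasing, and that $f'(\varphi)\geq 1-\varepsilon\geq 0$. The new ingredient is an upper bound on $f'$, so that we control $|f(\varphi)-f(\varphi^\star)|$ by a factor strictly less than $1$ times $|\varphi-\varphi^\star|$.

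First I would revisit the exact expression for $f'(\varphi)$ displayed in the proof of Theorem~\ref{th:cv}. Since $\psi=0$, the term $\rho_\varepsilon'(\varphi)\bigl(\mu_G(\varphi)-\mu^c_L(\varphi)+\tan(\theta_\l-\varphi)\mu^c_D(\varphi)\bigr)$ is nonpositive on $[\varphi^\star,\theta_\l]$ (the bracket is negative and $\rho_\varepsilon$ is decreasing, so its derivative is nonpositive, making the product nonnegative — here I would recheck the sign carefully, as this is the main subtlety). The dominant contribution is
\begin{equation*}
f'(\varphi)=1+\rho_\varepsilon(\varphi)\bigl(\mu_G'(\varphi)-\mu_L^{c}{}'(\varphi)-(1+\tan^2(\theta_\l-\varphi))\mu_D^{c}(\varphi)+\tan(\theta_\l-\varphi)\mu_D^{c}{}'(\varphi)\bigr)+\rho_\varepsilon'(\varphi)(\cdots).
\end{equation*}
To get $f'(\varphi)\leq q<1$ I would bound the bracket from above by a strictly negative quantity: using $\mu_G'(\varphi)\leq 0$ where relevant together with assumption~\eqref{eq:CVbound}, which guarantees $\min_{I^+}\mu_L^{c}{}'-\tan\theta_\l(1+\max_{I^+}\mu_D^{c}{}')>0$, the lift-derivative term dominates the drag terms. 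Simultaneously I would bound $\rho_\varepsilon(\varphi)$ from below by its value at $\theta_\l$, using that $\rho_\varepsilon$ is decreasing, which converts the denominator in~\eqref{def:rho_c} into the explicit quantity $\max_{I^+}\mu_L^{c}{}'+\sin\theta_\l+(1+\tan^2\theta_\l)\mu_D^{c}(\theta_\l)$ appearing in the claimed rate. Combining these yields $f'(\varphi)\leq 1-\frac{\min_{I^+}\mu_L^{c}{}'-\tan\theta_\l(1+\max_{I^+}\mu_D^{c}{}')}{\max_{I^+}\mu_L^{c}{}'+\sin\theta_\l+(1+\tan^2\theta_\l)\mu_D^{c}(\theta_\l)}=:q$, and assumption~\eqref{eq:CVbound} exactly ensures the numerator is positive so $q<1$.

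With the Lipschitz bound $|f(\varphi)-f(\varphi^\star)|\leq q|\varphi-\varphi^\star|$ in hand, I would finish by a routine induction: since $f(\varphi^\star)=\varphi^\star$ and the orbit stays in $[\varphi^\star,\theta_\l]$, one gets $|\varphi^{k}-\varphi^\star|\leq q^k|\varphi^0-\varphi^\star|=q^k|\theta_\l-\varphi^\star|$. Finally I would identify the constant $|\theta_\l-\varphi^\star|$ with the stated $|\theta_\l-\rho_\varepsilon(\theta_\l)\mu^c_L(\theta_\l)|$ by noting that $\varphi^\star=f(\varphi^\star)$ and, as computed in the proof of Theorem~\ref{th:cv}, $f(\theta_\l)=\theta_\l-\rho_\varepsilon(\theta_\l)\mu^c_L(\theta_\l)$, so that $|\theta_\l-\varphi^\star|$ is controlled by $|\theta_\l-f(\theta_\l)|$; alternatively the bound is stated directly in terms of the first step $|\theta_\l-f(\theta_\l)|$, which equals this explicit value. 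The main obstacle I anticipate is the careful sign and magnitude bookkeeping in bounding $f'$: one must simultaneously handle the $\mu_G'$ term (which can vanish or be negative), replace $\rho_\varepsilon(\varphi)$ by its minimal value $\rho_\varepsilon(\theta_\l)$ in a way that is consistent with the direction of the inequality, and verify that discarding the $\rho_\varepsilon'$ contribution only helps the estimate. Getting all these inequalities to point the same way, so that they assemble into precisely the stated ratio, is the delicate part; the induction and the identification of the leading constant are then straightforward.
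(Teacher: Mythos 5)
Your strategy is the same as the paper's: reuse from the proof of Theorem~\ref{th:cv} the stability of $[\varphi^\star,\theta_\l]$ and the lower bound $f'\geq 0$, then bound $f'$ from above by an explicit $q<1$ --- using $\mu_G'(\varphi)\leq\mu_G'(0)=\tan\theta_\l$, hypothesis~\eqref{eq:CVbound} for the positivity of the numerator, and the monotonicity of $\rho_\varepsilon$ to replace $\rho_\varepsilon(\varphi)$ by $\rho_\varepsilon(\theta_\l)$ --- and conclude by induction. One local correction: the bound you need on the Glauert term is $\mu_G'\leq\tan\theta_\l$, not ``$\mu_G'\leq 0$ where relevant'' (which fails near $0$); that inequality is precisely where the ``$1$'' in $\tan\theta_\l\,(1+\max_{I^+}\mu^{c}_D{}')$ comes from, and you do recover the correct final ratio.

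The step you defer --- ``verify that discarding the $\rho_\varepsilon'$ contribution only helps the estimate'' --- is the one that does not go through as you hope. On $[\varphi^\star,\theta_\l]$ one has $\rho_\varepsilon'\leq 0$ and $\mu_G-\mu^c_L+\tan(\theta_\l-\varphi)\mu^c_D\leq 0$, so their product is \emph{nonnegative}: it pushes $f'$ upward and therefore cannot simply be dropped when proving an upper bound on $f'$ (dropping it is legitimate only for the lower bound, which is how it is used in Theorem~\ref{th:cv}). Your own parenthetical already betrays the confusion (``is nonpositive \dots making the product nonnegative''). To be fair, the paper's proof of this rate estimate silently omits this term from the displayed expression of $f'$, so a fully rigorous argument would in either case require bounding this extra nonnegative contribution (or working with a constant step $\rho$). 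A second, more minor point you half-flag yourself: the induction gives $q^k\,|\varphi^0-\varphi^\star|$, and since the iterates decrease to $\varphi^\star$ one has $|\theta_\l-\varphi^\star|\geq|\theta_\l-f(\theta_\l)|=\rho_\varepsilon(\theta_\l)\mu^c_L(\theta_\l)$, so the inequality needed to replace the constant by the stated one points the wrong way (the standard a priori contraction estimate would introduce an extra factor $1/(1-q)$). Here too you mirror the paper, which concludes ``by induction'' without comment; but as a self-contained proof your write-up would need to address both points rather than assume they resolve favorably.
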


\begin{proof}
For we have already shown in the previous proof that $f{}'(\varphi)\geq 0$ on $[\varphi^\star,\theta_\l]$, it remains to determine an upper bound for $f{}'(\varphi)$. To do this, we use the bound~\eqref{eq:CVbound} and $\mu_G'(\varphi)\leq \mu_G'(0)=\tan\theta_\l$ to get:
\begin{align*}
f{}'(\varphi)	&=1 + \rho_\varepsilon\left(\mu_G'(\varphi) - \mu^{c}_L{}'(\varphi) -(1+ \tan^2(\theta_\l-\varphi))\mu^{c }_D(\varphi) + \tan(\theta_\l-\varphi)\mu_D^{c}{}'(\varphi)\right)\\
		&\leq 1 - \frac{\min\limits_{I^+}\mu^{c}_L{}' - \tan\theta_\l   ( 1+\max\limits_{I^+}\mu^{c}_D{}') }
			       {\max\limits_{I^+}\mu^{c}_L{}' + \sin\theta_\l + (1+\tan^2\theta_\l )\mu^{c}_D(\theta_\l) },
\end{align*}
where we have used $\max\left\{0,-\mu_G'(\varphi)\right\} \geq -\mu_G'(\theta_\l)=\sin\theta_\l$ to bound $\rho_\varepsilon $ from below.
The result \rComM{is} then obtained by induction. 
\end{proof}

\ComM{\subsubsection{A Newton version}
One can actually obtain quadratic convergence, i.e.  $|\varphi^k-\varphi^\star|\leq \delta |\varphi^0-\varphi^\star|^{2^k}$ for some $\delta>0$ by using a Newton procedure, i.e., setting
\[\rho(\varphi):= -\frac 1 {\mu^{c}_G{}'(\varphi) - \mu^{c}_L{}'(\varphi) -(1+ \tan^2(\theta_\l-\varphi))\mu^{c }_D(\varphi) + \tan(\theta_\l-\varphi)\mu_D^{c}{}'(\varphi)}.\]
and by choosing $\varphi^0$ close enough to $\varphi^\star$. 
\Com{In this formula, the functions $\mu^{c}_L$ and $\mu^{c}_D$ are usually only known experimentally, i.e. pointwise. In practice, splines or polynomial interpolation \rCCCom{are} used to evaluate $C_L$ and $C_D$ for any arbitrary angle. In this way, the derivatives $\mu^{c}_L{}'$ and $\mu^{c}_D{}'$ (of the extension) can be obtained without any supplementary computational cost. The term $\mu^{c}_G{}'(\varphi)$ can also be computed without significant additional cost as soon as $\mu^{c}_G{}(\varphi)$ has been computed, see Remark~\ref{rem=tau}.}




}

\CCCom{\subsection{Root-finding algorithms}\label{Sec:RF}
Reducing~(\ref{Correc:Eq1}--\ref{Correc:Eq3}) to a one dimensional equation allows \rCCCom{the application of} usual root-finding algorithms such as bisection.
In this way, Ning~\cite{Ning} used Brent's procedure~\cite{Brent} to solve~\eqref{eqNing}.
 Using~\eqref{eq:mu_mu}, a new root-finding approach consists in applying Brent's procedure to the equation $Res(\varphi)=0$. If 
 a correction for high values of $a$ is considered, then the framework of Corollary~\ref{cor:exist_corrected} implies that there exists a solution of the corrected model in $I^+$. In this case, the convergence of bisection algorithm or Brent's procedure is guaranteed.}
Moreover, the solution found in the case $\psi=0$, e.g., by Algorithm~\ref{algo_new_algo}, can be used to bracket the solution 
 in a finer way  than $I^+$.
\begin{lemma} Keep the assumptions of Corollary~\ref{cor:exist_corrected}, and denote by $\varphi_0$ a solution~\eqref{eq:corr_mu} where $\psi=0$. 
Then~\eqref{eq:corr_mu} admits a solution in $(\varphi_0,\min\{\theta_\l,\beta + \gamma_{\monlambda}\}]$ corresponding to a positive lift.
\end{lemma}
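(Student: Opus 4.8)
The plan is to recast \eqref{eq:corr_mu} as the vanishing of the residual already introduced in Algorithm~\ref{algo_bis},
\[
b(\varphi):=\mu^c_G(\varphi) - \mu^c_L(\varphi) + \tan(\theta_\l-\varphi)\mu^c_D(\varphi),
\]
and to bracket a zero of $b$ between $\varphi_0$ and $\max I^+=\min\{\theta_\l,\beta+\gamma_\l\}$ (see \eqref{zeroI+}) by the Intermediate Value Theorem. Lemma~\ref{lem:implicit} makes $\tau$, and hence $\mu^c_G$ through its definition \eqref{defmuGc}, continuous on $I^+$; since $\mu^c_L$, $\mu^c_D$ and $\varphi\mapsto\tan(\theta_\l-\varphi)$ are continuous on $I^+$ as well, $b$ is continuous, and solving \eqref{eq:corr_mu} amounts to finding a zero of $b$.

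First I would compare $\mu^c_G$ with its $\psi=0$ counterpart $\mu_G$. The correction term in \eqref{defmuGc},
\[
\frac{\cos\theta_\l\sin^2\varphi}{\cos(\theta_\l-\varphi)}\,\frac{\psi\left((\tau(\varphi)-a_c)_+\right)}{(1-\tau(\varphi))^2},
\]
is non-negative on $I^+$ because $\psi\geq 0$, $\cos\theta_\l>0$ and $\cos(\theta_\l-\varphi)>0$ for $\varphi\in(0,\theta_\l]$; hence $\mu^c_G(\varphi)\geq\mu_G(\varphi)$ throughout $I^+$. Evaluating at $\varphi_0$, the defining property of $\varphi_0$ as a solution of \eqref{eq:corr_mu} with $\psi=0$ reads $\mu^c_L(\varphi_0)-\tan(\theta_\l-\varphi_0)\mu^c_D(\varphi_0)=\mu_G(\varphi_0)$, so that $b(\varphi_0)=\mu^c_G(\varphi_0)-\mu_G(\varphi_0)\geq 0$. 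At the right endpoint, assumption \eqref{assum_correc_angle1} of Corollary~\ref{cor:exist_corrected} is exactly $b(\max I^+)\leq 0$. The Intermediate Value Theorem then produces a zero of $b$ in $[\varphi_0,\max I^+]$, and the positivity of $\mu^c_G$ on $I^+$, already established in the proof of Corollary~\ref{cor:exist_corrected}, shows that this solution corresponds to a positive lift.

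It remains to place the zero strictly to the right of $\varphi_0$, which is the delicate point. In the setting of this subsection the correction applies at $\varphi_0$, i.e. $\tau(\varphi_0)>a_c$; under the monotonicity of $\tau$ furnished by Lemma~\ref{lem:implicit} (when $g$ is decreasing), this is equivalent to $\varphi_0<\varphi^c$, with $\tau\to 1$ as $\varphi\to 0^+$ by Lemma~\ref{asympt} so that the active region is indeed an initial sub-interval of $I^+$. Then $\psi\left((\tau(\varphi_0)-a_c)_+\right)>0$, the inequality above is strict, $b(\varphi_0)>0$, and combined with $b(\max I^+)\leq 0$ this forces the root to lie in $(\varphi_0,\max I^+]$. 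The main obstacle is precisely this strictness: one must exclude the degenerate case $\tau(\varphi_0)\leq a_c$, in which the correction vanishes at and beyond $\varphi_0$ (by monotonicity of $\tau$) and the full equation collapses onto its $\psi=0$ version, leaving only $\varphi_0$ as a guaranteed root. Invoking that the correction applies — the standing hypothesis of the general case treated here — removes this degeneracy and closes the argument.
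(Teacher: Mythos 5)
Your proof is correct and takes essentially the same route as the paper's: evaluate the residual $b$ at $\varphi_0$ using $\mu_L^c(\varphi_0)-\tan(\theta_\l-\varphi_0)\mu_D^c(\varphi_0)=\mu_G(\varphi_0)$ together with the non-negativity of the correction term in \eqref{defmuGc}, compare with \eqref{assum_correc_angle1} at $\max I^+$, and conclude by the Intermediate Value Theorem. The paper's proof stops at the weak inequality $b(\varphi_0)\geq 0$ and does not discuss why the root lies \emph{strictly} to the right of $\varphi_0$, so your final paragraph identifying the need for the correction to be active at $\varphi_0$ is in fact more careful than the original.
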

\begin{proof}
Since $\psi=0$,~\eqref{eq:corr_mu} implies 
 $Res(\varphi_0)=-\dfrac{\cos\theta_\l{\sin^2\varphi_0}}{\cos(\theta_\l-\varphi_0)}\dfrac{\psi\left( (\tau(\varphi_0)-a_c)_+\right)}{(1-\tau(\varphi_0))^2}$, so that $Res(\varphi_0)\leq 0$.
The Intermediate Value Theorem and~\eqref{assum_correc_angle1} give the result.
\end{proof}

\section{Optimization}\label{Sec:opti}
%
%
%
%

The BEM model does not only aim to evaluate the efficiency of a given geometry, but also provides a framework to design rotors, that is, to select high-performance parameters $\gamma_{\l}$ and $c_{\l}$. In this way, monographs often consider a specific maximization procedure of a functional $C_p$, called {\it power coefficient} (\cite[p.129, (3.90a)]{Manwell}, \cite[p.328]{Glauert3})
, which corresponds to the ratio between the received and the captured energy. This quantity is usually defined by
\rComM{
$ C_p(\gamma_{\l},c_{\l},\varphi,a,a')
	=\int_{\l_{\min}}^{\l_{\max}} J_\l(\gamma_{\l},c_{\l},\varphi,a,a') d\l,
$ 
}
 where the elementary contribu\-tion $J_\l$ reads:
\rComM{ \begin{equation}\label{power-coef}
J_\l(\gamma_{\l},c_{\l},\varphi,a,a'):=\frac{8F_\l(\varphi)\l^3 }{\l_{\max}^2} a'(1-a)\left( 1-\frac{C_D(\varphi-\gamma_\l)}{C_L(\varphi-\gamma_\l)}\tan^{-1}\varphi\right).
\end{equation}
}
in which the variables satisfy the constraints~(\ref{Correc:Eq1}--\ref{Correc:Eq3}). The drag coefficient $C_D$ is consequently taken into account (though partly neglected in the reasoning, as explained hereafter)  as well as the \rCCCom{tip} loss correction. On the contrary, no correction related to high values of $a$ 
is considered. This motivates the description of an optimization algorithm for the full corrected model in Section~\ref{sec:grad}. In any case, the contributions are independent. As a consequence, we focus on the optimization problem associated with one element, i.e., we fix the value of $\l$ and optimize $J_\l$.



\subsection{Simplified model and usual optimum approximation}\label{sec:design}
The usual optimization procedure is described in, e.g.,~\cite[p.131-137]{Manwell}. For the sake of completeness, we recall it in the case where $F_\l=1$.

Considering independently each $\l$ on a discretization grid associated with the interval $[\l_{\min},\l_{\max}]$ and the corresponding functional $J_\l(\gamma_{\l},c_{\l},\varphi,a,a')$, the procedure starts by determining an angle $\overline{\alpha}$ which minimizes the ratio $\frac{C_D(\alpha)}{C_L(\alpha)}$. In the following steps, the coefficient $C_D$ is neglected: not only the factor $ 1-\frac{C_D(\varphi-\gamma_\l)}{C_L(\varphi-\gamma_\l)}\tan^{-1}(\varphi) $ is set to $1$ in~\eqref{power-coef}, but $C_D$ is also set to $0$ in the constraints, which correspond to the simplified model
~(\ref{Standard:Eq1}--\ref{Standard:Eq3}) afterwards.
Using Theorem~\ref{lem:reform_standard} to replace $\mu_L$ by $\mu_G$ in~(\ref{Standard:Eq2}--\ref{Standard:Eq3}), $a$, $a'$, and consequently $J_\l$ are expressed exclusively in terms of $\varphi$, namely 
 $a=1-\dfrac{\sin\varphi \cos(\theta_{\l}-\varphi)}{\sin \theta_{\l}}$, $a'= \dfrac{\sin\varphi \sin(\theta_{\l}-\varphi)}{\cos\theta_{\l}}$ and 
 $J_\l = \rComM{\frac{8\l^3 }{\l_{\max}^2}} \frac{ \sin^2\varphi\sin\left(2(\theta_{\l}-\varphi)\right)}{\sin 2\theta_\l}$.
As a consequence, it remains to optimize $\varphi \mapsto\sin^2\varphi \sin (2(\theta_{\l}-\varphi))$ on $[0,\theta_{\l}]$. \ComM{It is easily seen that} the maximum is attained at $\varphi^{*} = \frac{2}{3} \theta_{\l}$. Finally, 
 $\gamma_{\l}^{\ast} := \gamma_{\l}(\varphi^{\ast})$ and $c_{\l}^{\ast} := c_{\l}(\varphi^{\ast})$ can be computed from~\eqref{eq:def_alpha} and \eqref{eq:mu_mu}, which gives
\begin{equation}\label{opt_simp}
\gamma_{\l}^{\ast} := \varphi^{\ast} - \overline{\alpha},\;\;
c_{\l}^{\ast} := \dfrac{8\pi r\mu_G(\varphi^{\ast})}{BC_L(\overline{\alpha})}.
\end{equation}

\subsection{A gradient method for the corrected model}\label{sec:grad}
We now detail  an adjoint-based gradient method to tackle the optimization of $\gamma_\l$ and $c_\l$ in the framework of the corrected model. 
 Throughout this section, 
 $C_L'$ and $C_D'$ denote the derivatives of $C_L$ and $C_D$. We omit in the notation the dependence of $\mu^c_L$ and $\mu^c_D$ 
on $\varphi$ \rComM{and $c_\l$}.

We first recall how the introduction of Lagrange multipliers enables to compute the gradient of $J_\l$. 
Define the Lagrangian of Problem~\eqref{power-coef} by
\begin{align*}
{\cal L}_\l(\varphi,a,a', &\, p_1,p_2,p_3,c_\l,\gamma_\l)
= 
J_\l(\varphi,a,a',c_\l,\gamma_\l)-p_1\left(
\rComM{\tan\varphi-\frac{1-a}{\lambda_r(1+a')}}
 \right)\\
	&\quad -p_2\left( \frac{a }{1-a}-\frac{ 1}{\sin^2\varphi} (\mu^c_L\cos\varphi + \mu^c_D \sin\varphi)+\frac{\psi\left( (a-a_c)_+\right)}{(1-a)^2}\right)\\
	&\quad -p_3\left( \frac{a'}{1-a}-\frac{ 1}{\lambda_{\CCCom{r}} \sin^2\varphi} (\mu^c_L\sin\varphi - \mu^c_D \cos\varphi\right),
\end{align*}
where $p_1$, $p_2$ and $p_3$ are the Lagrange multipliers associated with the constraints~(\ref{Correc:Eq1}--\ref{Correc:Eq3}). The optimality system is obtained by canceling 
the partial derivatives of  ${\cal L}_\l$. Differentiating ${\cal L}_\l$ with respect to $p_1$, $p_2$ and $p_3$ and equating the resulting terms to zero gives the corrected model~(\ref{Correc:Eq1}--\ref{Correc:Eq3}), that can be solved using the algorithms presented in Section~\ref{Sec:algo}. Canceling the derivatives of ${\cal L}_\l$ with respect to $(\varphi,a,a')$ gives 
\begin{equation}\label{eq:p}
	M\cdot p = b,
\end{equation}
where $p\vcentcolon=( p_1 \ \ p_2\ \ p_3)^{\t}$ is the Lagrange multiplier vector, and
\begingroup
\begin{align*}
M:=&\begin{bmatrix}[1.6]
\frac 1{\cos^2\varphi} 
	&\rComM{v_\varphi
}
	&\rComM{w_\varphi
}
	\\
 \vspace{0.25cm}
-\frac1{\lambda_{\CCCom{r}}(1+a')} 
	& \frac{1+\psi'((a-a_c)_+)}{(1-a)^2} + \frac{2\psi((a-a_c)_+)}{(1-a)^3} 
	& \frac{a'}{(1-a)^2}
	\\
	\frac{1-a}{\lambda_{\CCCom{r}}(1+a')^2} 
	& 0 
	& \frac 1{1-a}
\end{bmatrix}
\\\rComM{
v_\varphi:=}&\rComM{\frac{2\cos\varphi(\mu^c_L\tan^{-1}\varphi +\mu^c_D)
-((\dpart{\mu_L^c}{\varphi}+\mu_D^c)\cos\varphi +(\dpart{\mu_D^c}{\varphi}+\mu_L^c)\sin\varphi)}{\sin^2\varphi}
}\\
\rComM{
w_\varphi:=}&\rComM{\frac{2\cos\varphi(\mu^c_L -\mu^c_D\tan^{-1}\varphi)
-((\mu_L^c-\dpart{\mu_D^c}{\varphi})\cos\varphi +(\mu_D^c+\dpart{\mu_L^c}{\varphi})\sin\varphi)}{\l\sin^2\varphi}
}\\
b:=&\frac{8F_\l(\varphi)\lambda_{\CCCom{r}}^3}{\lambda_{max}^2}
\begin{pmatrix}[1.5]
\rComM{
	\frac{a'(1-a)}{C_L(\varphi-\gamma_\l)}\left(\frac{  C_L'(\varphi-\gamma_\l) C_D(\varphi-\gamma_\l) -C_D'(\varphi-\gamma_\l) C_L(\varphi-\gamma_\l)}{C_L(\varphi-\gamma_\l)\tan\varphi} + \frac{C_D(\varphi-\gamma_\l)}{ \sin^2\varphi} \right)
}
	\\
	-a'\rComM{(1-\frac{C_D(\varphi-\gamma_\l)}{C_L(\varphi-\gamma_\l)}\tan^{-1}\varphi)}
	\\
	(1-a)\rComM{(1-\frac{C_D(\varphi-\gamma_\l)}{C_L(\varphi-\gamma_\l)}\tan^{-1}\varphi)}
\end{pmatrix}\\
&\quad +\frac{8F'_\l(\varphi)\lambda_{\CCCom{r}}^3}{\lambda_{max}^2}
\begin{pmatrix}[1.5]
	a'(1-a)\rComM{(1-\frac{C_D(\varphi-\gamma_\l)}{C_L(\varphi-\gamma_\l)}\tan^{-1}\varphi)}
	\\
	0
	\\
	0
\end{pmatrix}.
\end{align*}
\endgroup
Fix now the values of the pair $(\gamma_\l, c_\l)$ and set $\varphi, a, a', p$ as the corresponding solutions of~(\ref{Correc:Eq1}--\ref{Correc:Eq3}) and~\eqref{eq:p}, respectively. The gradient $\nabla J_\l(\gamma_\l, c_\l)$  
reads
\begin{equation}\label{eq:grad}
\nabla J_\l(\gamma_\l, c_\l) = 
	\left(
		\dpart{\mathcal{L}_\l}{\gamma_\l} \quad 
		\dpart{\mathcal{L}_\l}{c_\l}	
	\right)^{\t},
\end{equation}
where
\begin{align*}
\dpart{\mathcal{L}_\l}{\gamma_\l} 
	&=\rComM{\frac{8F_\l(\varphi)\l^3 }{\l_{\max}^2}} a'(1-a) \frac{C_D'(\varphi-\gamma_\l)C_L(\varphi-\gamma_\l)-C_L'(\varphi-\gamma_\l)C_D(\varphi-\gamma_\l)}{C_L^2(\varphi-\gamma_\l) \tan\varphi} \\
	&\quad - p_2\frac{1}{\sin^2\varphi}(\dpart{\mu^{\rComM{c}}_L}{\varphi}\cos\varphi+\dpart{\mu^{\rComM{c}}_D}{\varphi}\sin\varphi) 
- p_3\frac{1}{\lambda_{\CCCom{r}}\sin^2\varphi}(\dpart{\mu^{\rComM{c}}_L}{\varphi}\sin\varphi-\dpart{\mu^{\rComM{c}}_D}{\varphi}\cos\varphi),
\\
\dpart{\mathcal{L}_\l}{c_\l}	
	&= p_2\frac{1}{\sin^2\varphi}(\dpart{\mu^{\rComM{c}}_L}{{c_\l}}\cos\varphi+\dpart{\mu^{\rComM{c}}_D}{c_\l}\sin\varphi) 
	+ p_3\frac{1}{\lambda_{\CCCom{r}}\sin^2\varphi}(\dpart{\mu^{\rComM{c}}_L}{c_\l}\sin\varphi-\dpart{\mu^{\rComM{c}}_D}{c_\l}\cos\varphi).
\end{align*}
The associated optimization procedure is then formalized with Algorithm~\ref{opti_corrected}.
\begin{algorithm}
\caption{Numerical optimization}
\label{opti_corrected}
\begin{algorithmic}
\STATE{\textbf{Input:} $\textrm{Tol}>0$, $\kappa>0$, $\alpha\mapsto C_L(\alpha)$, $\alpha\mapsto C_D(\alpha)$, $\lambda_{\CCCom{r}}$,  $x\mapsto \psi(x)$.}\\
\STATE{\textbf{Initial guess:}  $\gamma_\l, c_\l$.}\\
\STATE{\textbf{Output:}  $\gamma_\l, c_\l$.}\\

\STATE Set $err \vcentcolon= \textrm{Tol}+1.$\;
\WHILE{$err>\textrm{Tol}$}{
\item Set $\varphi$, $a$, $a'$ as the solutions of~{\rm (\ref{Correc:Eq1}--\ref{Correc:Eq3})}.\;
\item Set $p$ as the solution of~\eqref{eq:p}.\;
\item Compute the gradient $\nabla J_\l(\gamma_\l, c_\l)$ given by~\eqref{eq:grad}.\;
\item Update $\begin{pmatrix}  \rComM{\gamma_\l} \\ \rComM{c_\l} \end{pmatrix} =\begin{pmatrix} \rComM{\gamma_\l} \\ \rComM{c_\l} \end{pmatrix} +\kappa\nabla J_\l(\gamma_\l, c_\l)$,\;
\item Set $err\vcentcolon=\|\nabla J_\l(\gamma_\l, c_\l)\|$.\;
}
\ENDWHILE
\end{algorithmic}
\end{algorithm}

\section{Numerical experiments}\label{numres}
In this section, we test the performance of the algorithms presented in Section~\ref{Sec:algo} on a practical case\ComM{s} and \ComM{tackle the design optimization problem considered in Section~\ref{Sec:opti} in the case of an actual wind turbine.}

\subsection{\ComM{Example of a small river turbine}}\label{numintro}
\ComM{This example is related to the project HyFloEFlu
, which was devoted to the design of a river turbine adapted to the Garonne river in Bordeaux, France. }
We consider a turbine of radius $R=1.1 $m, consisting of three blades, designed \ComM{with  a unique profile, namely NACA 4415. In the case we study, the TSR is close to 3, which corresponds for example to $U_{-\infty} =$1.5 m.s$^{-1}$ and $\omega=2/3.2\pi$s$^{-1}$ }. The 
 functions $C_L$ and $C_D$ have been obtained using  truncated Fourier representations of data provided by the free software Xfoil~\cite{Drela}, \rCCCom{ with $Re=9.10^5$.}
\rCCCom{
\begin{remark} Note that for such a value, Xfoil sometimes fails to predict airfoil lift and drag accurately because a very simple model is then used to describe the transition from laminar to turbulent flow in the airfoil boundary layer~\cite{vantreuren}. Hence, this test must be considered as an experiment to test the solution algorithms rather than an accurate estimate of the turbine efficiency.
\end{remark}}
 The first step of the usual design procedure presented in Section~\ref{sec:design} gives 
 $\overline{\alpha}=0.2215$ rad. Plots of $C_L$ and $C_D$ are given in Figure~\ref{polar}.

\begin{figure}[h!]
\begin{center}
	\begin{subfigure}{0.475\textwidth}
		\includegraphics[width=1\textwidth]{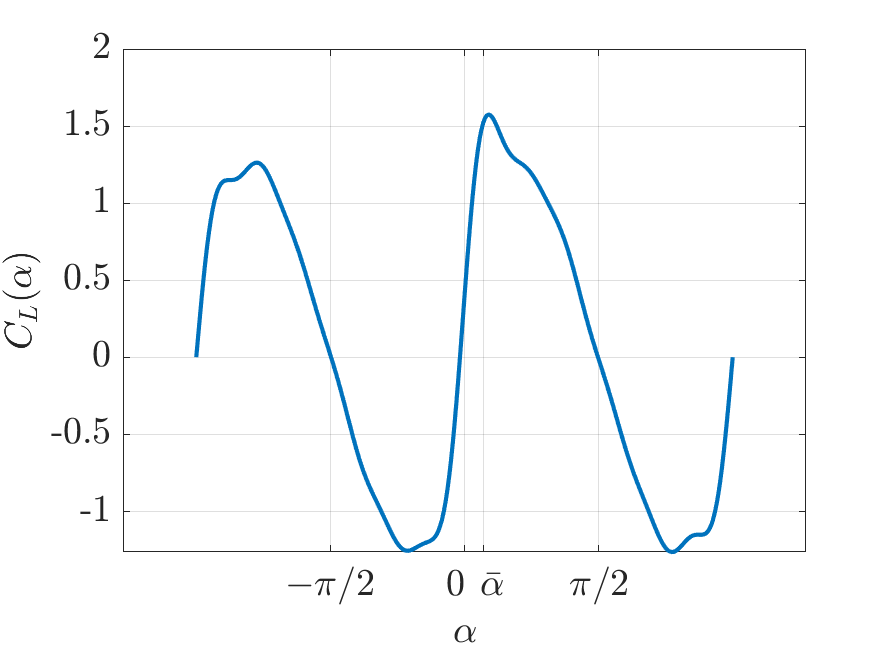}
	\end{subfigure}	
	\hfill
	\begin{subfigure}{0.475\textwidth}
		\includegraphics[width=1\textwidth]{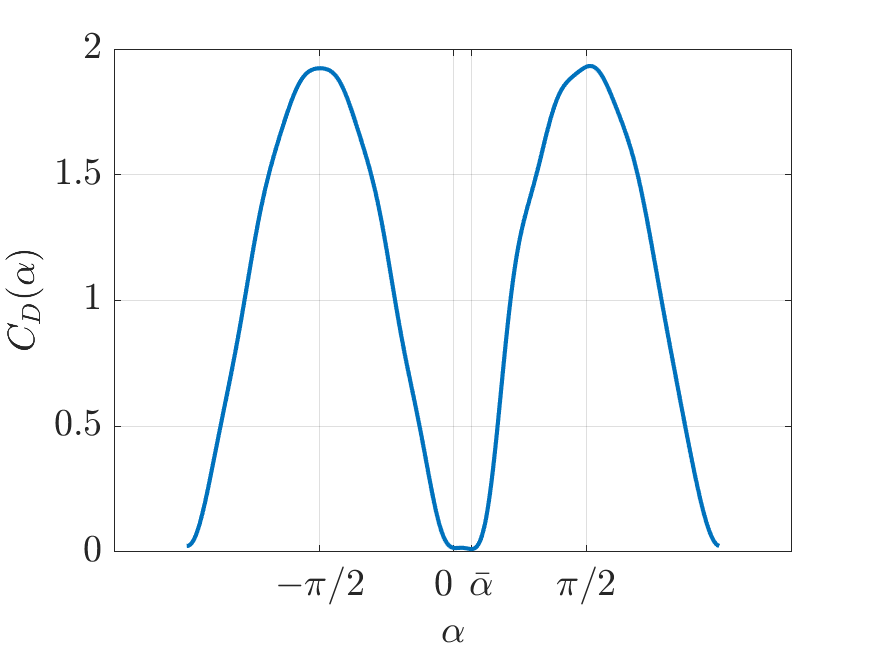}
	\end{subfigure}
\end{center}
\caption{Lift and Drag coefficients $C_L$ and $C_D$ as functions of $\alpha$, \rCCCom{for $Re=9.10^5$}.}\label{polar}
\end{figure}
We use the correction of Wilson \textit{et al} and Spera, meaning that $a_c=1/3$, see Table~\ref{Table_correc}. We first focus on three different blade elements associated with $\lambda_{\CCCom{r},1}=0.5$, $\lambda_{\CCCom{r},2}=1.75,$ and $\lambda_{\CCCom{r},3}=3$, respectively. 
In these three cases,  we  either set $(\gamma_\l,c_\l)=(\gamma_{\l}^{\ast},c_\l^{\ast})$, i.e. the optimal values of the simplified model given by~\eqref{opt_simp}
or $(\gamma_\l,c_\l)=:(\gamma_{\l}^{c},c_\l^{c})$, i.e. the optimal values of the corrected model. \ComM{The former typically corresponds to the first step of an optimization procedure, where $(\gamma_{\l}^{\ast},c_\l^{\ast})$ is used as an initial guess.
The  latter is computed with Algorithm~\ref{opti_corrected} and corresponds typically to one of the last steps of an optimization process}. 
 These values, as well as the associated $\varphi_c$ are given in Table~\ref{c_gamma_vals}, whereas corresponding graphs of the functions $\mu^c_{LD}:\varphi\mapsto\mu_L^c(\varphi)-\tan(\theta_\l-\varphi)\mu_D^c(\varphi)$, $\mu_{L}$, $\mu_G^c$ and $\mu_G$ are presented in Figure~\ref{fig:examples}. 
\begin{table}
\begin{center}
\begin{tabular}{|c||c|c||c|c||c|c|}
\hline
$\lambda_{\CCCom{r}}$ & $\gamma_\l^\ast$ & $\gamma_\l^c$ & $c_\l^\ast$ & $c_\l^c$ & $\varphi_c\quad (\gamma_\l^\ast,c_\l^\ast)$ & $\varphi_c\quad (\gamma_{\l}^{c},c_\l^{c})$ \\
\hline\hline
$\lambda_{\CCCom{r},1}=0.5 $  	& 0.516627 	& 0.520195	& 1.429701 &  0.255311	&    (0.715856,0.996060)    & 0.707018 \\ \hline 
$\lambda_{\CCCom{r},2}=1.75 $ 	& 0.124625	& 0.123680	& 0.318397 &  0.200768 	& 0.343358 & 0.343095\\ \hline 
$\lambda_{\CCCom{r},3}=3 $ 		& -0.006972	&  -0.052160	& 0.045246 &  0.066050  & 0.213839  & 0.214025\\ \hline 
\end{tabular}
\caption{Optimal values of the twist $\gamma_\l$ and the chord $c_\l$ for various values of $\lambda_{\CCCom{r}}$, with $(\gamma_\l,c_\l)=(\gamma_{\l}^{\ast},c_\l^{\ast})$ and $(\gamma_\l,c_\l)=:(\gamma_{\l}^{c},c_\l^{c})$. }\label{c_gamma_vals}
\end{center}
\end{table}
For these two blade geometries,  $I^+=(0,\theta_{\monlambda}]$, $\rComM{\varphi_c}<\theta_\l$ for all elements. 
However, the function $g$ is non-decreasing in few cases, as, e.g. when $\l=0.5$ and $(\gamma_\l,c_\l)=(\gamma_\l^\ast,c_\l^\ast)$. In this case, the last statement of Lemma~\ref{lem:implicit} 
does not apply which explains the existence of two values $\varphi_c^1$ and $\varphi_c^2$ where the graphs of $\mu_G^c$ and $\mu_G$ merge or separate. 
In the other cases, Corollary~\ref{cor:exist_corrected} applies, which is confirmed by the plots. 
\begin{figure}[h!]
\begin{center}
		\includegraphics[width=.32\textwidth]{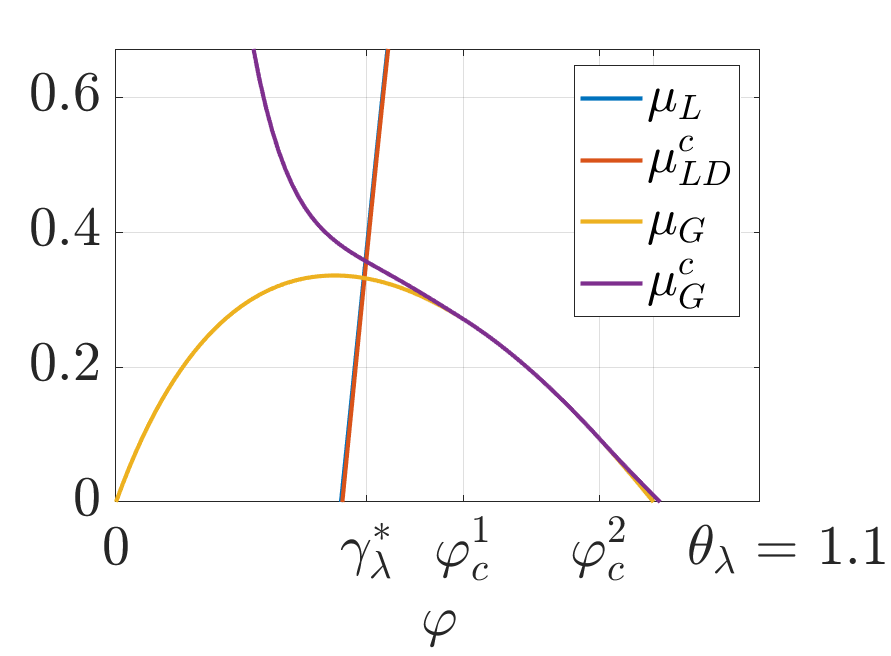}
		\includegraphics[width=.32\textwidth]{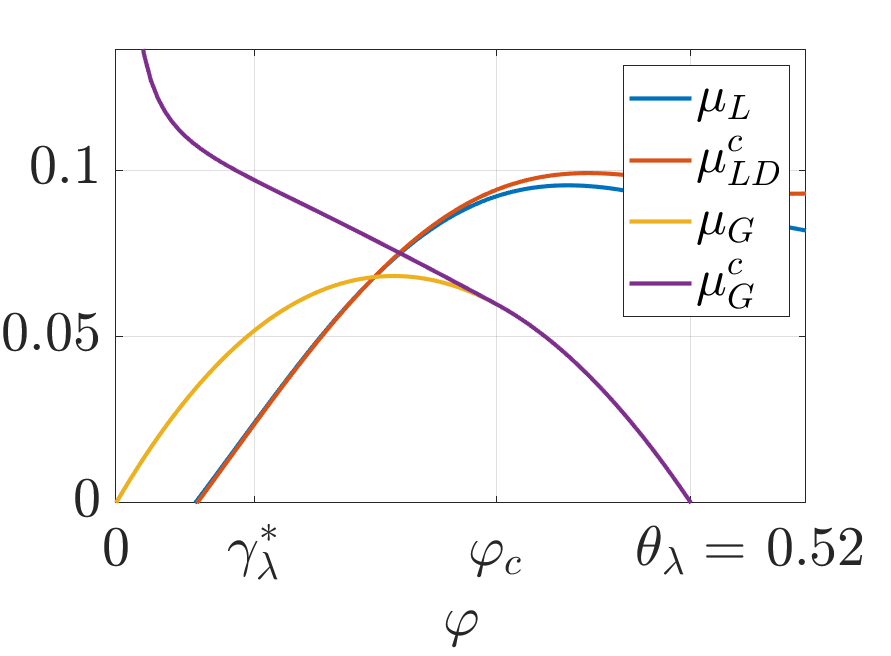}
		\includegraphics[width=.32\textwidth]{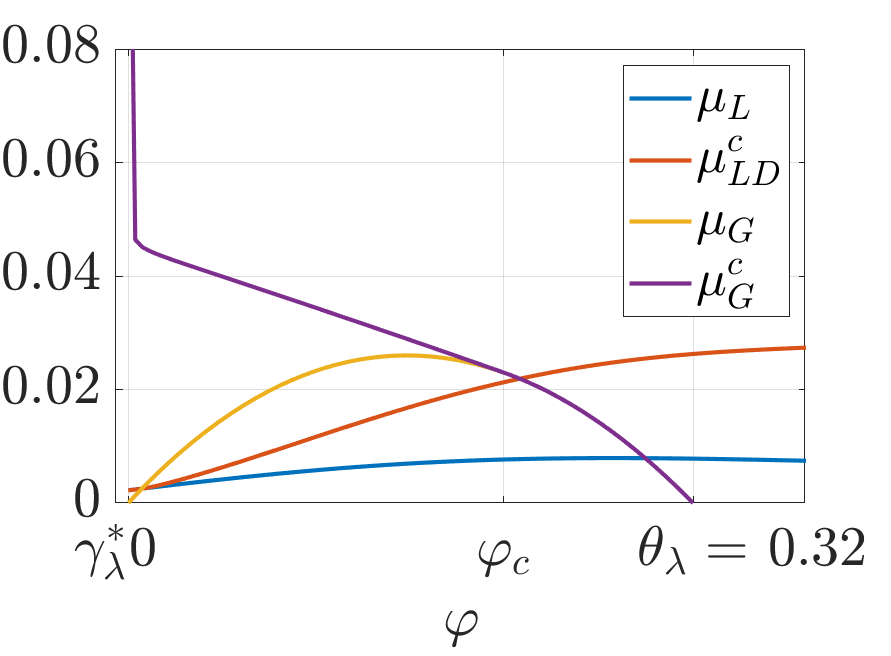}\\

		\includegraphics[width=.32\textwidth]{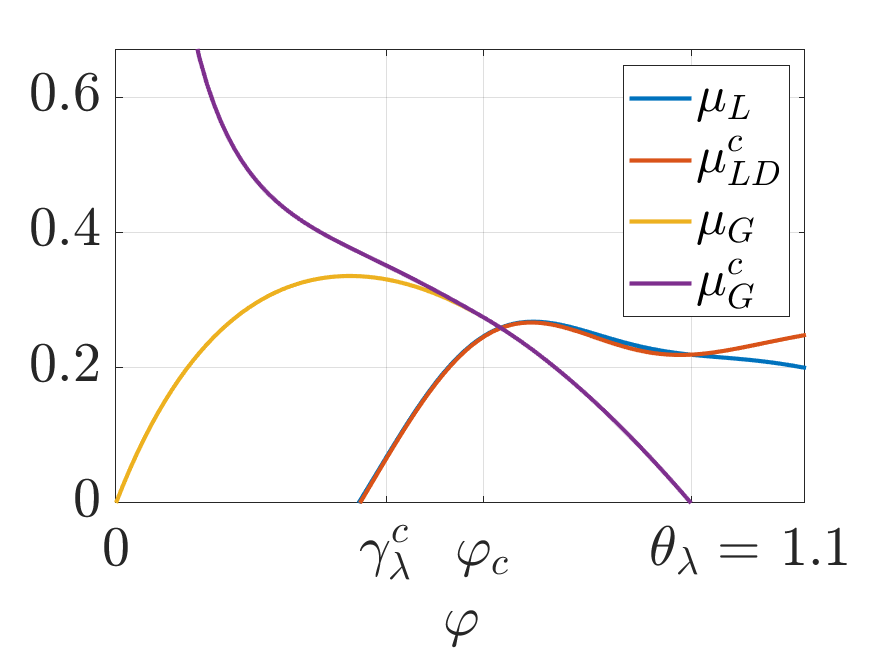}
		\includegraphics[width=.32\textwidth]{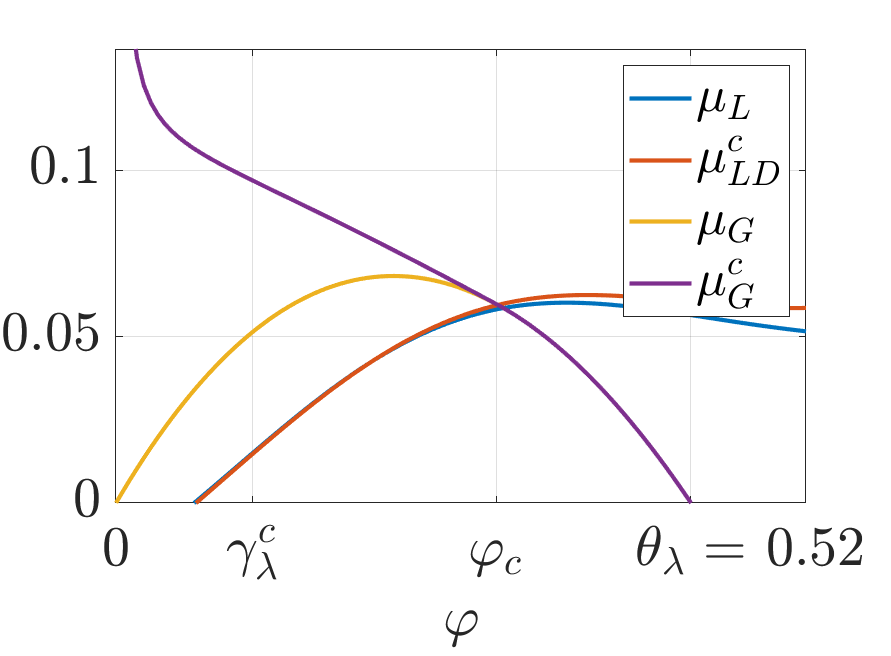}
		\includegraphics[width=.32\textwidth]{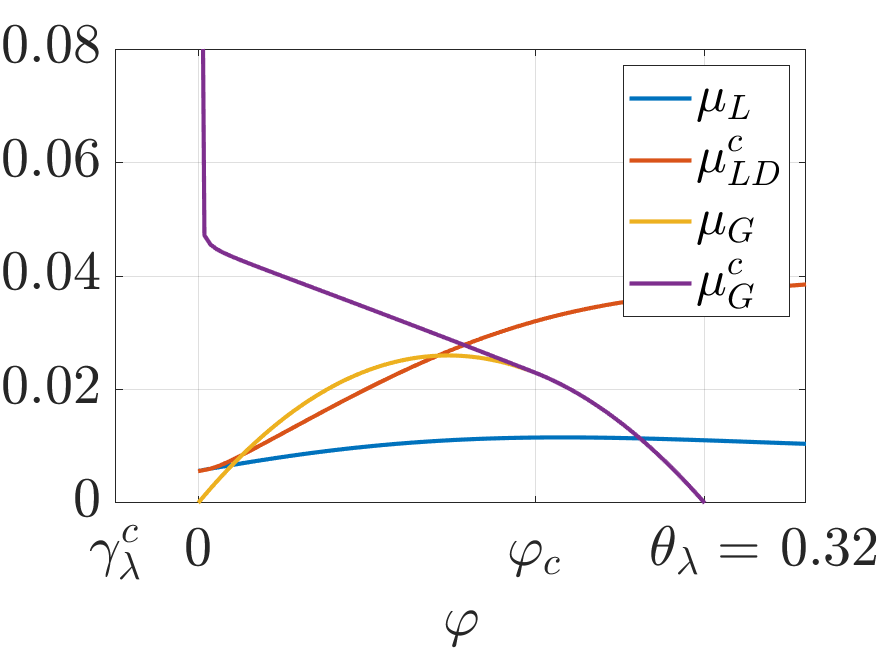}
	\caption
{Graphs of the functions $\mu_{L}$,  $\mu^c_{LD}$, $\mu_G^c$ and $\mu_G$, with $(\gamma_\l,c_\l)=(\gamma_\l^\ast,c_\l^\ast)$ (top) and $(\gamma_\l,c_\l)=(\gamma_\l^c,c_\l^c)$ (bottom), for $\lambda_{\CCCom{r}}=0.5$ (left), $\lambda_{\CCCom{r}}=1.75$ (middle), $\lambda_{\CCCom{r}}=3$ (right). The values of these parameters are given in Table~\ref{c_gamma_vals}. Note that these figures are similar to the scheme given in~\cite[Figure 3.27, p.126]{Manwell}.}
	\label{fig:examples}
\end{center}
\vspace{-.5cm}
\end{figure}
We observe multiple solution of type (1) (see Section~\ref{multiple}) 
 in the case of the simplified model. As for the full corrected model, we always have a unique solution. 

To compare the efficiency of the solution algorithms  we consider the corrected model and 
 measure the number of iterations ($k$) required
to solve accurately~\eqref{eq:corr_mu} in the sense that 
$\left| Res(\varphi^k) \right|
\leq \textrm{Tol} = 10^{-10}.$ 
We use this stopping criterion in all our tests, instead of the respective definitions of $err$ given in the algorithms.
\ComM{We test the algorithms presented in Section~\ref{Sec:algo}, i.e. the two versions of the standard fixed point, the robust and Newton versions of the optimized fixed point and the root-finding algorithms.}
Remark that due to the choice of 
 correction, iterations in \ComM{each} 
 algorithm have similar computational costs, namely, the solving of second order polynomials corresponding to~\eqref{Correc:Eq2} when applying  Algorithm~\ref{algo_corrected} or \ComM{Ning's algorithm 
and to~\eqref{eq:a=f} when applying Algorithm~\ref{algo_new_algo} or our new root-finding to solve $Res(\varphi)=0$ (see Section~\ref{Sec:RF}). 
}
In this test, \ComM{root-finding algorithms do} not always apply when $a_c=1$: such a case gives rise to multiple solution (see 
Figure~\ref{fig:examples}
) implying that $Res(\varphi)$ have the same sign on both sides of $I_0$. 
The initialization is done with $\varphi^0=\theta_\l$ for Algorithm~\ref{algo_corrected} and Algorithm~\ref{algo_new_algo}, whereas 
\ComM{the root-finding algorithms are initialized with the intervals $I_0:=[10^{-2},\pi/2-10^{-2}]$ for Ning's algorithm and $I_0:=[10^{-2},\theta_\l]$ for our new root-finding algorithm to solve $Res(\varphi)=0$}. 
We set $\gamma_\l=\gamma_\l^\ast$ and $c_\l=c_\l^\ast$ and run our test on the two cases $a_c=1$ and $a_c=1/3$.  The former case gives rise to a situation where $\psi((a-a_c)_+)=\psi(0)=0$, i.e., $\mu_G^c=\mu_G$, so that Theorem~\ref{th:cv} applies. 
The results are presented in Figure~\ref{tab:it}.
\begin{figure}[h!]
\begin{center}
	\includegraphics[width=.46\textwidth]{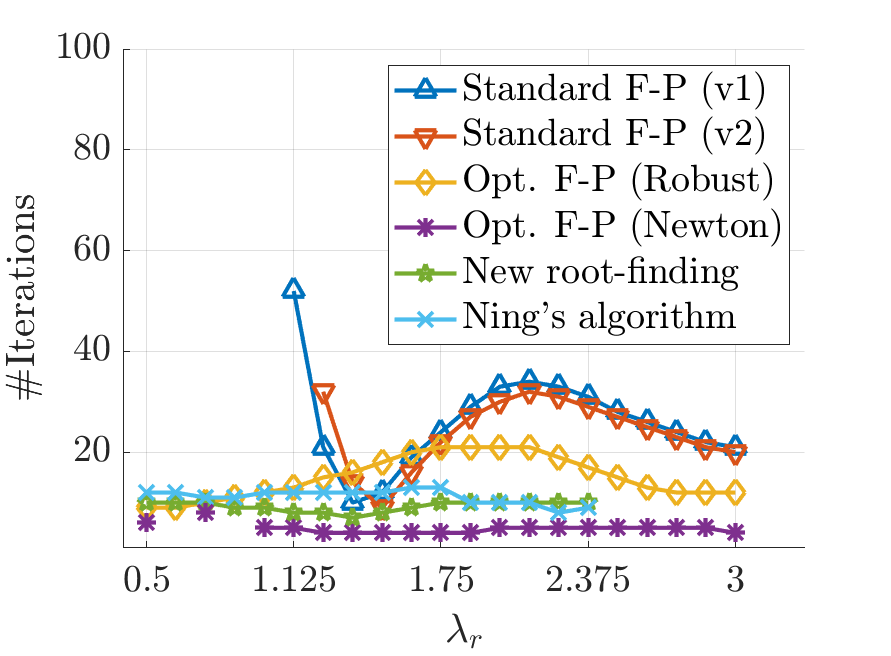}
	\includegraphics[width=.46\textwidth]{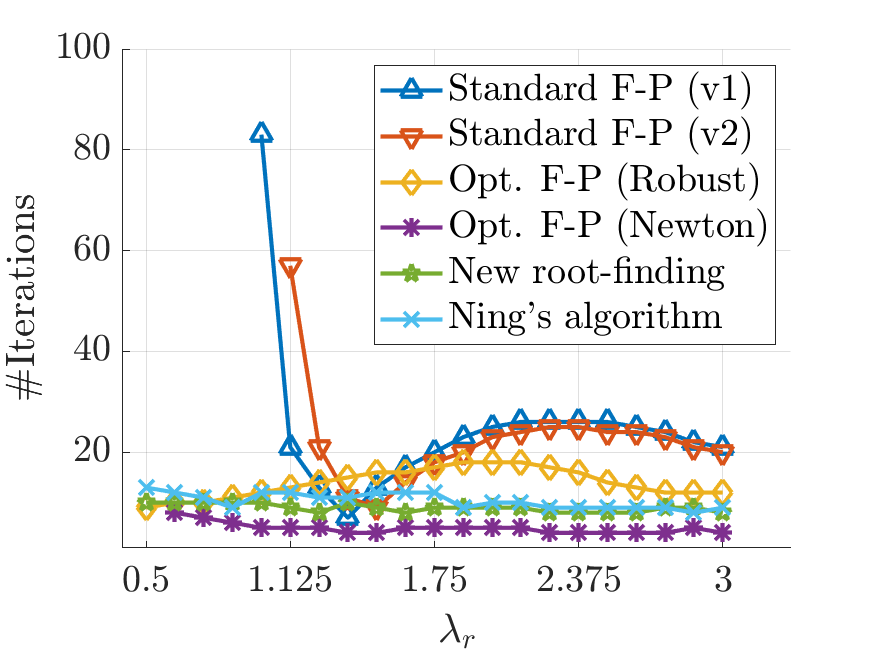}
\caption{Number of iterations required to solve~\eqref{eq:corr_mu} with  Algorithm~\ref{algo_corrected} (Standard fixed-point version 1 \& 2), Algorithm~\ref{algo_new_algo} (Optimized fixed-point, robust and Newton versions) 
  and root-finding approaches (Ning's algorithm and new root-finding)
as function of $\l$. Left: $a_c=1$, right: $a_c=1/3$. Only convergence cases are reported.
 }
\label{tab:it}
\end{center}
\vspace{-.5cm}
\end{figure}
\Com{In case of convergence, the obtained limit is the same with all algorithms}.
\ComM{We observe that the robust version of Algorithm~\ref{algo_new_algo} is the only algorithm that always converges and that its Newton version outperforms all other algorithms and only diverges in three cases.}

\subsection{\CCCom{Example of a large \rCCCom{wind turbine}}}\label{numNL}
\CCCom{We consider the IEA Wind 15-MW reference wind turbine\footnote{All numerical data related to this example are given in \url{https://github.com/IEAWindTask37}.}.
In this example the blade length is 117 m, decomposed into $50$ elements and designed for a TSR equal to 9. 
 The coefficients $C_L$ and $C_D$ take into account 3D-effects by using Du-Selig~\cite{Du1998} stall delay 3D correction and are specified for each element.  We interpolate them using the Akima algorithm~\cite{Akima70,Akima74} which provides smooth approximations. A full description of the turbine is given in~\cite{Evan}.


In our test, we compute optimal parameters $(\gamma_\l^c,c_\l^c)$ using the Matlab function \texttt{fminunc}, providing the gradient as in Algorithm~\ref{opti_corrected}. 
For the sake of consistency, with use the same tolerance Tol$=10^{-7}$ in the stopping criteria of the optimization procedure and of the solution algorithm considered to solve~\eqref{eq:mu_mu}, i.e., the algorithms stop when  $\|\nabla J_\l(\gamma_\l, c_\l)\|\leq$ Tol and $|Res(\varphi)|\leq$ Tol, respectively. For the sake of numerical efficiency, we use a continuation approach, meaning that we optimize the elements sequentially by starting from the blade tip and initialize the optimization of the current element with the design obtained for the element previously considered. This process is itself initialized with optimum approximation applied to the tip element. 
We then compare it to the actual design, denoted by $(\gamma_\l^{true},c_\l^{true})$ to $(\gamma_\l^c,c_\l^c)$. 
The results are presented in Figure~\ref{opti_numNL}, whereas the corresponding values of the power coefficient $C_P$ are 
$C_P(\gamma_\l^{true},c_\l^{true})=0.4531$\footnote{The actual design is planned to achieve $C_P=0.489$, see~\cite[p.8]{Evan}},  $C_P(\gamma_\l^c,c_\l^c)=0.4628$ and $C_P(\gamma_\l^\ast,c_\l^\ast)=0.0127$. 
\begin{figure}
\begin{center}
		\includegraphics[width=.32\textwidth]{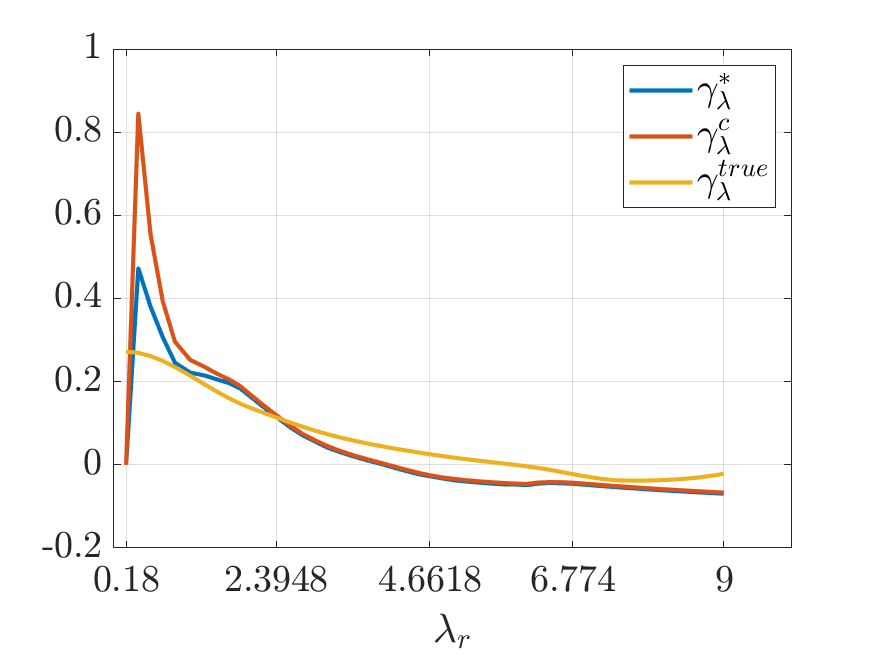}
		\includegraphics[width=.32\textwidth]{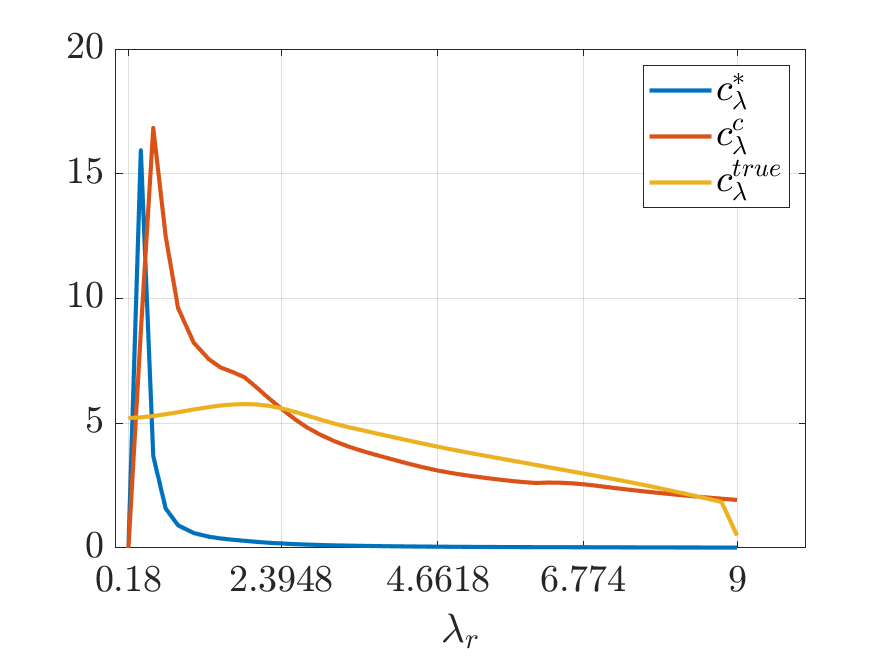}
		\includegraphics[width=.32\textwidth]{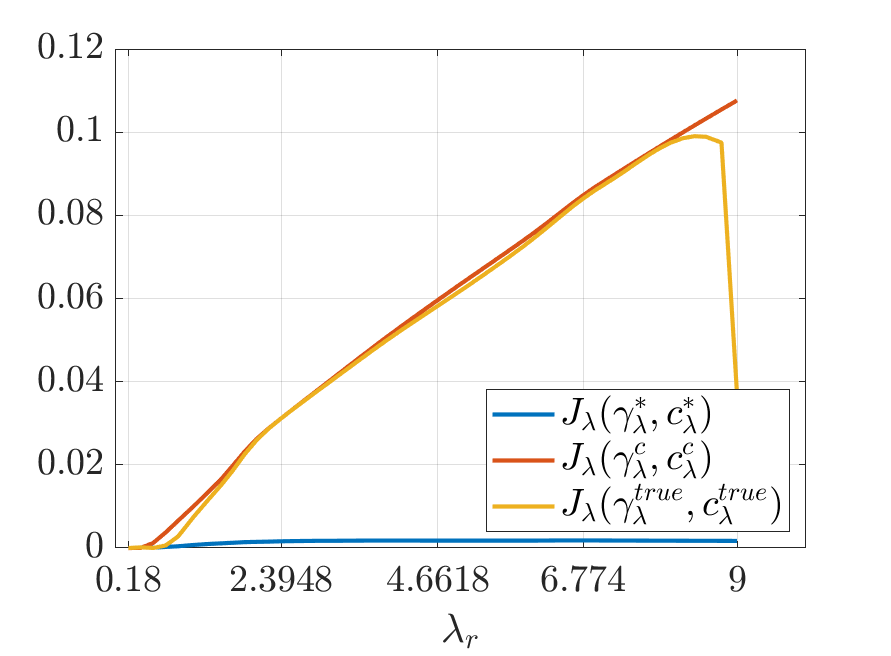}
	\caption{Graphs of $\gamma_\l^\ast$ and $\gamma_\l^c$, $c_\l^\ast$ and $c_\l^c$, 
 $J_\l(\gamma_\l^\ast,c_\l^\ast)$ and $J_\l(\gamma_\l^c,c_\l^c)$.}
	\label{opti_numNL}
\end{center}
\vspace{-.5cm}
\end{figure}
We observe that the optimum approximation gives a very bad design. 
On the other hand, the optimal design $(\gamma_\l^c,c_\l^c)$ appears to be close to the actual design $(\gamma_\l^{true},c_\l^{true})$, except for small values of $\l$, i.e. in the neighborhood of the hub where it only weakly influences the power coefficient. This explains that these two  designs give similar values of this coefficient.
 Remark that the chords $c_\l^\ast$ and $c_\l^c$ are very large for small values of $\l$, which makes them unrealistic in practice.
Though the results do not depend on the considered solution algorithm (up to the tolerance Tol), this one impacts the total number of iterations used to solve~\eqref{eq:mu_mu} required during the optimization process, as shown in Table~\ref{iter_opt}. 
The robust version and Newton version of Algorithm~\ref{algo_new_algo} appear to be the fastest procedures in this example.

\begin{table}
\begin{center}
\begin{tabular}{|c||c|c|c|}

\hline Solution algo.   & Stand. F-P (v1) & Stand. F-P (v2) & Opt. F-P (robust)  \\
\hline
\#Iterations &  9105 & 8646 & 5045   \\
\hline 
\end{tabular}

\medskip

\begin{tabular}{|c||c|c|c|}

\hline Solution algo.   &  Opt. F-P (Newton)  & New root-finding & Ning's alg.  \\
\hline
\#Iterations &   1688 & 5653 & 5965  \\
\hline 
\end{tabular}
\caption{Number of iterations of the solutions algorithms during the optimization. }\label{iter_opt}
\end{center}
\end{table}



}
\Com
{\section{Conclusion}
In this paper, we present a new formulation of the BEM model, which respects the paradigm of this approach in the sense that it decomposes the model into a macroscopic part, related to the momentum theory, and a local part, related to the blade element theory.
This framework allows us to obtain existence results and new solution algorithms which outperform the usual algorithms and whose convergence can be analyzed mathematically.
We have focused on the case of extracting turbines and a future work could consist in extending our formulation to propellers, which was the initial purpose of this theory. Moreover, some work is required to combine or include this model into modern CFD codes devoted to turbine design. Using BEM model as coarse solver or preconditionner of the fluid-structure interaction system could improve the convergence properties of associated PDE solvers.} 	

\section*{Acknowledgments}
The authors acknowledge support from ANR Cin\'e-Para (ANR-15-CE23-0019) and ANR HyFloEFlu (ANR-10-IEED-0006-04). \rComM{ The authors thank the anonymous referees for their insightful suggestions. J.S. thanks Dylan Machado for his careful proofreading of the article. 
}

\section*{Appendix: a case of convergence of Algorithm~\ref{algo_corrected} \ComM{(version 1)}}
\label{case_simple}

 In the case of the simplified model, Algorithm~\ref{algo_corrected} reads as an iterative procedure 
based on the formula
\begin{equation}\label{eq:iteration_standard}
\varphi^{k+1} :=\widetilde{f}(\varphi^k),
\end{equation}
where $\widetilde{f}(x):=\frac\pi 2 - {\rm atan}\left(\l+\mu_L(x)h(x)  \right)$ and $h(x):= \dfrac{\lambda_{\CCCom{r}}\tan^{-1}x +1}{        \sin x}$. 
This framework makes it possible to obtain bounds for this sequence.
\begin{lemma}\label{lem:stab}
Suppose that Assumption~\ref{assume:gal} holds and that $\max I^+=\theta_\l$, 
 with $\mu_L{}$ non-decreasing.
If 
\begin{equation}\label{assum:stability_standard}
\mu_L(\theta_\l)\leq \mu_G(\gamma_\l).
\end{equation} 
and $\varphi^0 \in[\gamma_\l,\theta_\l]$, then the sequence defined by~\eqref{eq:iteration_standard} satisfies $\forall k\in\N, \ \varphi^{k}\in [\gamma_\l,\theta_\l]$.
\end{lemma}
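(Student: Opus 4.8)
The plan is to prove that the interval $[\gamma_\l,\theta_\l]$ is stable under the map $\widetilde f$, after which the claim follows by a trivial induction from the hypothesis $\varphi^0\in[\gamma_\l,\theta_\l]$. Thus the whole task reduces to showing that $\gamma_\l\le\widetilde f(x)\le\theta_\l$ for every $x\in[\gamma_\l,\theta_\l]$; note that no monotonicity of $\widetilde f$ itself is needed.

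The cornerstone of the argument is the algebraic identity
\[\mu_G(\varphi)\,h(\varphi)=\tan^{-1}\varphi-\l,\]
valid for all $\varphi\in(0,\tfrac\pi2)$, which I would check by a direct computation: writing $\tan(\theta_\l-\varphi)=\frac{1-\l\tan\varphi}{\l+\tan\varphi}$ (using $\tan\theta_\l=1/\l$) in $\mu_G(\varphi)=\sin\varphi\tan(\theta_\l-\varphi)$ and multiplying by $h(\varphi)=\frac{\l\tan^{-1}\varphi+1}{\sin\varphi}$, the factor $\l\cos\varphi+\sin\varphi$ cancels and leaves $\tan^{-1}\varphi-\l$. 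This identity is what lets me rephrase the two endpoint conditions as inequalities on the single quantity $\mu_L(x)h(x)$, and it also re-expresses $\widetilde f(x)=x$ as $\mu_L(x)=\mu_G(x)$, confirming consistency with Theorem~\ref{lem:reform_standard}.

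For the upper bound I would use that ${\rm atan}$ is increasing together with $\tan(\tfrac\pi2-\theta_\l)=\tan^{-1}\theta_\l=\l$ to see that $\widetilde f(x)\le\theta_\l$ is equivalent to $\mu_L(x)h(x)\ge0$. Since $h>0$ on $(0,\theta_\l]$, it suffices that $\mu_L\ge0$ there; this follows from Assumption~\ref{assume:gal} because $\max I^+=\theta_\l$ forces $\theta_\l\le\beta+\gamma_\l$, so that $x-\gamma_\l\in[0,\theta_\l-\gamma_\l]\subseteq I_\beta\cap\R^+$ and hence $C_L(x-\gamma_\l)\ge0$. For the lower bound, the same monotonicity of ${\rm atan}$ shows $\widetilde f(x)\ge\gamma_\l$ is equivalent to $\mu_L(x)h(x)\le\tan^{-1}\gamma_\l-\l$, and by the identity above the right-hand side equals $\mu_G(\gamma_\l)h(\gamma_\l)$. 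I would then chain $\mu_L(x)\le\mu_L(\theta_\l)\le\mu_G(\gamma_\l)$ (the first from $\mu_L$ non-decreasing, the second from hypothesis~\eqref{assum:stability_standard}) together with $0<h(x)\le h(\gamma_\l)$ (from $h$ positive and decreasing, since $x\ge\gamma_\l$), all factors being non-negative, to conclude $\mu_L(x)h(x)\le\mu_G(\gamma_\l)h(\gamma_\l)$, as required.

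The routine supporting facts are that $h$ is positive and strictly decreasing on $(0,\tfrac\pi2)$ (immediate from $h(x)=\l\cos x/\sin^2x+1/\sin x$, each summand being positive and decreasing) and that $\mu_G(\gamma_\l)\ge0$, which holds since $0\le\mu_L(\theta_\l)\le\mu_G(\gamma_\l)$ and in turn guarantees $\gamma_\l\ge0$, so that $[\gamma_\l,\theta_\l]\subseteq I^+$ and every expression above is well defined. The main thing to get right is the identity and the bookkeeping of signs in the chained inequalities: once the quantities are known to be non-negative the chaining is forced, but the non-negativity must be established first, which is precisely where Assumption~\ref{assume:gal} and the condition $\max I^+=\theta_\l$ enter.
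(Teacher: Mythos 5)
Your proof is correct and follows essentially the same route as the paper's: induction on the stability of $[\gamma_\l,\theta_\l]$, with the upper bound from $\mu_L h\ge 0$ and the lower bound from chaining $\mu_L(x)h(x)\le\mu_L(\theta_\l)h(\gamma_\l)\le\mu_G(\gamma_\l)h(\gamma_\l)=\tan^{-1}\gamma_\l-\l$ via the monotonicity of $\mu_L$ and $h$ together with~\eqref{assum:stability_standard}. You make explicit the identity $\mu_G(\varphi)h(\varphi)=\tan^{-1}\varphi-\l$ and the non-negativity bookkeeping (including why $\gamma_\l\ge 0$), which the paper's terser proof uses only implicitly.
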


\begin{proof}
Assume that for some $k\in\N$, $\varphi^{k}\in  [\gamma_\l,\theta_\l]$.
Because of~\eqref{eq:iteration_standard}, we have $\tan^{-1}\varphi^{k+1} :=\l+\mu_L(\varphi^k) h(\varphi^k)$.
Combining it with 
 $\mu_L\geq 0$  
 gives $\varphi^{k+1}\leq \theta_\l$. 
Since $\mu_L$ is increasing and $h$ is decreasing, $\tan^{-1}\varphi^{k+1} \leq \l+\mu_L(\theta_\l)h(\gamma_\l)$.
Because of~\eqref{assum:stability_standard}, the latter is bounded by $\tan^{-1}\gamma_\l$. The result follows by induction. 
\end{proof}

We complete this result by a condition about a contraction property.
\begin{lemma}\label{lem:der}
Suppose that $\mu_L$ is differentiable and denote by $\mu_L'$ its derivative.  The derivative of $\widetilde{f}$ satisfies
\[ -\sin\theta_\l\max\limits_{I^+}\mu_L' h(\gamma_\l) \leq \widetilde{f}{}'(\varphi) \leq \sin\theta_\l\max\limits_{I^+}\mu_L |h'(\gamma_\l)|.\]
\end{lemma}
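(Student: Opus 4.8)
The plan is to differentiate $\widetilde{f}$ directly and then control numerator and denominator separately, exploiting the sign information available on the interval $[\gamma_\l,\theta_\l]$ that is stabilized by $\widetilde{f}$ in Lemma~\ref{lem:stab}. Writing $g(x):=\l+\mu_L(x)h(x)$, so that $\widetilde{f}(x)=\frac\pi2-{\rm atan}(g(x))$, the chain rule gives
\[
\widetilde{f}{}'(x)=-\frac{g'(x)}{1+g(x)^2}=-\frac{\mu_L'(x)h(x)+\mu_L(x)h'(x)}{1+g(x)^2}.
\]
The whole argument then reduces to signing and bounding the four factors $\mu_L,\mu_L',h,h'$ together with the denominator.

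First I would collect the sign and monotonicity facts on $[\gamma_\l,\theta_\l]$. By Assumption~\ref{assume:gal}, $\mu_L\ge 0$ there, and the standing hypothesis that $\mu_L$ is non-decreasing (inherited from Lemma~\ref{lem:stab}, where $\mu^c_L=\mu_L$ since $F_\l=1$) gives $\mu_L'\ge 0$. For $h$, a direct computation shows $h(x)=(\l\cos x+\sin x)/\sin^2 x>0$ and that its derivative has numerator $-\l(1+\cos^2 x)-\sin x\cos x$, which is strictly negative on $(0,\frac\pi2)$; hence $h'<0$ and $h$ is decreasing, so $h(x)\le h(\gamma_\l)$. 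A further sign analysis of $h''$ (equivalently, monotonicity of $x\mapsto|h'(x)|=-h'(x)$, which blows up as $x\to0^+$) yields $|h'(x)|\le|h'(\gamma_\l)|$ on $[\gamma_\l,\theta_\l]$. For the denominator, since $\mu_L,h\ge 0$ we have $g\ge\l$, whence $1+g(x)^2\ge 1+\l^2=1/\sin^2\theta_\l$ by the definition~\eqref{def:theta_lambda} of $\theta_\l$; consequently $1/(1+g(x)^2)\le\sin^2\theta_\l\le\sin\theta_\l$.

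With these in hand the two bounds follow by splitting $\widetilde{f}{}'$ into its two summands. For the upper bound, the term $-\mu_L'(x)h(x)/(1+g(x)^2)$ is non-positive (as $\mu_L',h\ge 0$) and may be discarded, while the remaining term equals $\mu_L(x)|h'(x)|/(1+g(x)^2)\le\sin\theta_\l\,(\max_{I^+}\mu_L)\,|h'(\gamma_\l)|$. For the lower bound, symmetrically, the term $-\mu_L(x)h'(x)/(1+g(x)^2)$ is non-negative and is dropped, while $-\mu_L'(x)h(x)/(1+g(x)^2)\ge-\sin\theta_\l\,(\max_{I^+}\mu_L')\,h(\gamma_\l)$. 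Combining the two gives the stated inequalities.

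The main obstacle is the elementary but genuinely tedious calculus on $h$: producing the closed forms of $h'$ (and, for $|h'(x)|\le|h'(\gamma_\l)|$, of $h''$) and verifying their signs on $(0,\frac\pi2)$. Everything else is bookkeeping of signs, provided one keeps in mind that the bounds are meant on the stabilized interval $[\gamma_\l,\theta_\l]$ of Lemma~\ref{lem:stab}, so that evaluating $h$ and $h'$ at the left endpoint $\gamma_\l$ indeed furnishes their extrema; were $\varphi$ allowed below $\gamma_\l$, the replacement of $h(x),|h'(x)|$ by their values at $\gamma_\l$ would fail.
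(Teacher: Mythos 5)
Your proposal is correct and follows essentially the same route as the paper: differentiate $\widetilde{f}$, drop the non-positive (resp.\ non-negative) summand for the upper (resp.\ lower) bound, bound the denominator via $1+g^2\ge 1+\l^2=1/\sin^2\theta_\l$, and use monotonicity of $h$ and $|h'|$ on $[\gamma_\l,\theta_\l]$ to evaluate at $\gamma_\l$. If anything you are slightly more careful than the paper, which asserts that ``$h'$ is decreasing'' where what is actually needed (and what you state) is that $|h'|=-h'$ is decreasing.
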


\begin{proof}
We have $\widetilde{f}{}'(\varphi)= \frac{-1}{1+\left(\l+\mu_L(\varphi)h(\varphi)\right)^2}(\mu_L'(\varphi)h(\varphi)+\mu_L(\varphi)h'(\varphi))$.
Since $\varphi\in (\gamma_\l,\theta_\l)$, $\mu_L'(\varphi)h(\varphi)\geq 0$, $\mu_L(\varphi)h'(\varphi)\leq 0$, we have $\frac{-1}{1+\left(\l+\mu_L(\varphi)h(\varphi)\right)^2}\mu_L'(\varphi)h(\varphi) 
\leq \widetilde{f}{}'(\varphi)\leq
\frac{-1}{1+\left(\l+\mu_L(\varphi)h(\varphi)\right)^2} \mu_L(\varphi)h'(\varphi)$. 
Because $h$ and $h'$ are decreasing on $(\gamma_\l,\theta_\l)$, and since $\mu_L(\varphi)h(\varphi)\geq 0$, the result follows.
\end{proof}

We are now in a position to obtain a conditional convergence result.

\begin{theorem}\label{th:classic}
In addition to the assumptions of Lemma~\ref{lem:stab}, suppose that $\mu_L$ is differentiable and satisfies
\begin{equation}
\sin\theta_\l\max\limits_{I^+} \mu_L'  h (\gamma_\l)  \leq 1,\qquad  
\sin\theta_\l\max\limits_{I^+} \mu_L   |h'(\gamma_\l)| \leq 1.\label{contract2}
\end{equation}
Then, if $\varphi^0$ belongs to $[\gamma_\l,\theta_\l]$, the sequence $(\varphi^k)_{k\in\N}$ defined by~\eqref{eq:iteration_standard} converges to the unique solution of~\eqref{eq:mu_mu}.
\end{theorem}
\begin{proof}
As a consequence of Lemma~\ref{lem:stab}, the function $\widetilde{f}$ maps $[\gamma_\l,\theta_\l]$ onto itself. From
~\eqref{contract2} and Lemma~\ref{lem:der} we deduce $\widetilde{f}$ is contracting. The result follows from the Banach fixed-point theorem.
\end{proof}

\bibliographystyle{abbrv}			
\bibliography{biblio}

\end{document}